\title{On applications of Razborov's flag algebra calculus to extremal 3-graph theory}
\author{Victor Falgas-Ravry \thanks{School of Mathematical Sciences, Queen Mary, University of London, Mile End Road, London E1 4NS, U.K. Email: {\tt v.falgas-ravry@qmul.ac.uk}.}
\and Emil R. Vaughan \thanks{School of Electronic Engineering and Computer Science, Queen Mary, University of London, Mile End Road, London E1 4NS, U.K. Email: {\tt e.vaughan@qmul.ac.uk}. Supported by EPSRC grant EP/H016015/1.}}
\newtheorem{theorem}{Theorem}
\newtheorem{proposition}[theorem]{Proposition}
\newtheorem{lemma}[theorem]{Lemma}
\newtheorem{remark}{Remark}
\newtheorem{conjecture}{Conjecture}
\newtheorem{question}[conjecture]{Question}
\newtheorem{observation}[remark]{Observation}
\theoremstyle{definition}
\newtheorem{construction}{Construction}
\DeclareMathOperator{\ex}{ex}
\DeclareMathOperator{\E}{\mathbb{E}}
\providecommand{\size}[1]{\left|#1\right|}
\providecommand{\flagmatic}{Flagmatic}
\newcommand{\Text}[1]{\text{\textnormal{#1}}}
\newcommand{\website}{\url{http://maths.qmul.ac.uk/~ev/flagmatic}}
\newcommand{\sinsixty}{0.8660254} % arxiv doesn't like sin(60) in tikz
\newcommand{\midfano}{2.598076} % arxiv doesn't like sqrt(27)/2 in tikz
\begin{document}

\maketitle
\begin{abstract}
In this paper, we prove several new Tur\'an density results for 3-graphs with independent neighbourhoods. We show:
\[\pi(K_4^-, C_5, F_{3,2})=12/49, \ \ \pi(K_4^-, F_{3,2})=5/18 \ \ \text{and} \]
\[\pi(J_4, F_{3,2})=\pi(J_5, F_{3,2})=3/8, \]
where $J_t$ is the 3-graph consisting of a single vertex $x$ together with a disjoint set $A$ of size $t$ and all $\binom{\size{A}}{2}$ 3-edges containing $x$.
We also prove two Tur\'an density results where we forbid certain induced subgraphs:
\[ \pi(F_{3,2}, \Text{ induced } K_4^-)=3/8 \ \ \text{and} \]
\[ \pi(K_5,  \Text{ 5-set spanning 8 edges})=3/4. \]
The latter result is an analogue for $K_5$ of Razborov's result that 
\[\pi(K_4,  \Text{ 4-set spanning 1 edge})=5/9.\]

We give several new constructions, conjectures and bounds for Tur\'an densities of 3-graphs which should be of interest to researchers in the area. Our main tool is `\flagmatic', an implementation of Razborov's flag algebra calculus, which we are making publicly available. In a bid to make the power of Razborov's method more widely accessible, we have tried to make 
\flagmatic\ as user-friendly as possible, hoping to remove thereby the major hurdle that needs to be cleared before using the flag algebra calculus.

Finally, we spend some time reflecting on the limitations of our approach, and in particular on which problems we may be unable to solve. Our discussion of the `complexity barrier' for the flag algebra calculus may be of general interest.
\end{abstract}

\section{Introduction} 
Extremal graph and hypergraph theory have in recent years seen a string of results obtained by application of the flag algebra calculus developed by Razborov~\cite{Razborov1}. With the notable exception of the Fano plane, most known Tur\'an density results for 3-graphs have been obtained anew using his method, as well as some new results and the best known upper bounds for several other problems~\cite{Razborov2, Razborov3}. Particularly impressive in this respect was Razborov's proof of Tur\'an's conjecture under an additional restriction~\cite{Razborov2}:
\[\pi(K_4, \Text{ 4-set spanning 1 edge})=5/9.\]

In this paper, we use the flag algebra calculus to prove several new Tur\'an density results. In Section 3.1 we develop the extremal theory of 3-graphs with independent neighbourhoods, proving:
\[\pi(K_4^-, C_5, F_{3,2})=12/49,\]
\[\pi(K_4^-, F_{3,2})=5/18 \ \ \text{and} \]
\[\pi(J_4, F_{3,2})=\pi(J_5, F_{3,2})=3/8,\]
where $J_t$ is the 3-graph consisting of a vertex $x$ together with a disjoint set $A$ of size $t$ and all $\binom{\size{A}}{2}$ 3-edges containing $x$.
In Section 3.2, we prove two density results where we forbid certain induced subgraphs:
\[ \pi(F_{3,2}, \text{ induced } K_4^-)=3/8 \ \ \text{and} \]
\[ \pi(K_5,  \text{ 5-set spanning 8 edges})=3/4. \]
The latter result is an analogue for $K_5$ of the aforementioned theorem of Razborov for $K_4$. In addition we provide a number of new bounds, constructions and conjectures which may be of general interest.

Our main tool is \flagmatic, an implementation of the flag algebra calculus, which was written by the second author, and which we are making publicly available. Razborov's flag algebra calculus is an efficient formalism for computing density bounds in extremal combinatorics. In the case of extremal 3-graph theory, it does this by reducing an initial problem of proving inequalities for subgraph densities to a  semi-definite programming problem, which in some cases can be solved exactly with the aid of a computer. We discuss what `in some cases' means in greater detail in Section~4. Let us only say for the moment that without extra ideas we cannot hope for a general extremal theory to emerge from a direct application of Razborov's flag algebra calculus.

However, given the difficulty of extremal 3-graph theory and the paucity of known results, an implementation of the flag algebra calculus such as \flagmatic\ can be of great help to theory-building efforts, by providing many useful bounds and guiding investigations towards attainable goals. A major hurdle for mathematicians wishing to use the flag algebra calculus in their work is the need of a computer program to assist them in the calculations. \flagmatic\ was designed with this in mind, and we have tried to make it as user-friendly as possible.

As the flag algebra calculations involved in our proofs are very long and not terribly informative, we have produced `certificates' rather than write them out in full. The certificates are available on the \flagmatic\ website
\begin{quote}
\website
\end{quote}
where the interested reader may also download a copy of \flagmatic\ for herself. In addition, our results have also been independently verified by Baber and Talbot~\cite{Baber}.

We should stress that our proofs are computer assisted rather than computer generated; indeed in every case we could produce `proofs by hand' by doing a lot of enumeration and computations, and then pulling some very large positive semi-definite matrices out of our hat. This would take thousands of pages however, and would not be very informative. We have therefore opted not to do so.

This paper is structured as follows: after introducing a small amount of notation, Section 2 is devoted to explaining how \flagmatic\ works, beginning with an exposition of the flag algebra calculus (Section 2.2), some remarks about \flagmatic\ (Section 2.3), and a discussion of the proof certificates it produces (Section 2.4).

Section 3 contains our main results. In Section 3.1 we develop an extremal theory of 3-graphs with independent neighbourhoods, proving the first set of results mentioned in the introduction and providing several new constructions and conjectures; in Section 3.2 we consider forbidding induced subgraphs, obtaining in particular a theorem related to the conjecture of Tur\'an that $\pi(K_5)=3/4$; in Section 3.3 we go on to discuss `non-principality' (the fact that $\pi(\mathcal{F}\cup \mathcal{G}) < \min \left(\pi(\mathcal{F}), \pi (\mathcal{G}) \right)$ for some families of 3-graphs $\mathcal{F}, \mathcal{G}$).

Finally in Section 4 we consider the limits inherent to our approach, in particular the `complexity barrier' it runs into. We end with some open questions and a summary of results and constructions.

\section{The flag algebra calculus}
\subsection{Some notation and definitions}
We begin with some notation and definitions, most of which are standard. A \emph{3-graph} $G$ is a pair of sets $G=(V,E)$, with $V=V(G)$ a set of vertices, and $E=E(G)$ a collection of 3-sets from $V$, which are the \emph{3-edges} of $G$. Given a family of 3-graphs $\mathcal{F}$, we say that a 3-graph $G$ is \emph{$\mathcal{F}$-free} if $G$ contains no member of $\mathcal{F}$ as a subgraph. We write $\ex(n, \mathcal{F})$ for the maximal number of 3-edges that can be present in an $\mathcal{F}$-free 3-graph. The nonnegative function $\ex(n, \mathcal{F})$  is referred to as the \emph{Tur\'an number} of $\mathcal{F}$.

An easy averaging argument shows that $\ex(n, \mathcal{F})/\binom{n}{3}$ is nonincreasing and hence tends to a limit as $n\rightarrow \infty$. This limit, denoted by $\pi(\mathcal{F})$, is the \emph{Tur\'an density} of $\mathcal{F}$. It is the asymptotically maximal proportion of edges present in an $\mathcal{F}$-free 3-graph. The standard \emph{Tur\'an (density) problem} for 3-graphs is: given a family $\mathcal{F}$, determine $\pi(\mathcal{F})$. The analogous question for 2-graphs has been completely answered by the Erd\H os-Stone Theorem; by contrast very few Tur\'an densities of 3-graphs are known. (See the recent survey paper of Keevash~\cite{Keevash} for details.)

We say that a particular instance of the Tur\'an problem for 3-graphs is \emph{stable} if there is a sequence of 3-graphs
\[G_1, \ G_2, \ \dots, \ G_n, \ \dots \]
such that for any $\varepsilon>0$ there exists $\delta>0$ and $n_0 \in \mathbb{N}$ such that any $\mathcal{F}$-free 3-graph on $n \ge n_0$ vertices with more than $(\pi(\mathcal{F})-\delta)\binom{n}{3}$ 3-edges can be transformed into $G_n$ by adding or deleting fewer than $\varepsilon n^3$ 3-edges. (Intuitively, this says there is an essentially unique extremal configuration, and that any `close to extremal' 3-graph must lie at a small `edit' distance from it.)

Let us now define various standard 3-graphs that appear in this paper. We shall write $[n]$ for $\{1,2, \dots n\}$, and when enumerating 3-edges, we shall often write $xyz$ for $\{x,y,z\}$. When there is no confusion possible, we may also use  `edge' for `3-edge', `graph' for `3-graph' and `subgraph' for `3-subgraph'. Given a set $A$ and an integer $r$, we shall write $A^{(r)}$ for the set of $r$-sets of $A$.

The \emph{complete} 3-graph on $t$ vertices is the 3-graph $K_t=([t], [t]^{(3)})$. Deleting a single 3-edge from $K_4$ yields a copy of $K_4^-$, the unique (up to isomorphism) 3-graph on $4$ vertices with 3 edges. We let $C_5$ denote the \emph{(strong) 5-cycle} $C_5=([5], \{123,234,345, 451, 512\})$.

We shall also touch on \emph{links}. Given a 3-graph $G$ and $x \in V(G)$, the \emph{link graph} (or \emph{link}) of $x$ in $G$ is the 2-graph
\[G_x=\left(V\setminus \{x\}, \{ab: \ xab\in E(G)\}\right).\] We shall consider the problem of forbidding the links of a 3-graph from containing a complete 2-graph on $t$ vertices, and we define $J_t$ to be the corresponding forbidden 3-subgraph, namely \[J_t=\left([t+1], \left\{\{x,y,t+1\}:\ \{xy\}\in[t]^{(2)} \right\}\right).\] This 3-graph $J_t$ is a special case of a `suspension' (namely the 3-suspension of $K_t^2$); in the more general notation due to Keevash~\cite{Keevash} it is denoted by $S^3K_t^2$.

Various constructions we consider in this paper involve taking a (possibly unbalanced) partition of the vertex set $V= A_1 \sqcup A_2 \sqcup \dots \sqcup A_r$ and then adding edges according to some rule. In this setting, a 3-edge has type $A_iA_j A_k$ if it is of the form $xyz$ with $x \in A_i, y \in A_j, z \in A_k$.

A \emph{blow-up} construction is obtained by taking a $3$-graph $H$ on $V(H)=[r]$ with some possibly degenerate edges---for example `112' or `333'---and using it as a template to construct configurations for graphs of order $n$ for every $n \in \mathbb{N}$ as follows: \begin{itemize}
\item partition $[n]$ into $r$ parts $A_1 \sqcup A_2 \sqcup \dots \sqcup A_r$
\item add all edges of type $A_iA_jA_k$ with $ijk \in E(H)$
\end{itemize}
An \emph{iterated blow-up} construction is obtained, as the name suggests, by taking a blow-up construction from a template $H$ and then repeating the construction inside (some of) the $\size{V(H)}$ parts of the resulting 3-graph, and then again in the resulting subparts, and so on. The partition and edges obtained by the first iteration are said to be at \emph{level 1} of the construction, the subpartition and edges given by the second iteration are said to lie at \emph{level 2}, and so on.

Finally and most importantly, given a 3-graph $G$ of order $\size{V(G)}=n$ and a 3-graph $H$ of order $m\leq n$, let us define the \emph{(induced) subgraph density} of $H$ in $G$, denoted by $d_H(G)$ to be the probability that an m-subset of $V(G)$ chosen uniformly at random induces a copy of $H$ in $G$, i.e. that the resulting random subgraph of G is isomorphic to $H$. When $H$ is the 3-edge $([3], \{123\})$, we write $d(G)$ for $d_H(G)$ and call it the \emph{(edge) density} of $G$.

\subsection{Mantel's theorem via the flag algebra calculus}
For the sake of making this paper self-contained, we shall give here a brief overview of the flag algebra calculus. As stated in the introduction, it consists of an efficient formalism introduced by Razborov~\cite{Razborov1} for converting the problem of proving certain inequalities between subgraph densities into a semi-definite programming problem, which can be solved with the aid of a computer. Excellent expositions of this calculus from an extremal combinatorics perspective have already appeared in the literature; our presentation draws in particular on Section 7 of~\cite{Keevash} and Section 2.1 of~\cite{BaberTalbot}.

For ease of notation and the sake of clarity, we shall consider 2-graphs rather than 3-graphs for our exposition, in contrast to~\cite{BaberTalbot, Keevash}. Razborov~\cite{Razborov1} in fact defined his flag algebra calculus in a much more general setting which includes 2-graphs and 3-graphs as special cases; we feel that the 2-graph case gives all the intuition necessary, while keeping calculations to a minimum.

Let $K_3^{(2)}$ denote the complete 2-graph on 3 vertices, otherwise known as the triangle. To illustrate our discussion, we shall use the following weak form of Mantel's Theorem as a running example:

\begin{theorem}\label{weak mantel}
\[\pi(K_3^{(2)})=1/2.\]
\end{theorem}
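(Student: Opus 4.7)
The plan is to establish the two inequalities $\pi(K_3^{(2)}) \ge 1/2$ and $\pi(K_3^{(2)}) \le 1/2$ separately. For the lower bound I would exhibit the balanced complete bipartite graph $G_n := K_{\lfloor n/2 \rfloor, \lceil n/2 \rceil}$: it is manifestly triangle-free, and a direct count gives $e(G_n)/\binom{n}{2} \to 1/2$, so $\pi(K_3^{(2)}) \ge 1/2$.

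For the upper bound, I would proceed via the flag algebra calculus that the paper is about to formalise, in its simplest non-trivial instance. Take the type $\sigma$ to consist of a single labelled vertex, and consider the two $\sigma$-flags of order $2$: let $F_1$ be the flag in which the labelled vertex is adjacent to the unlabelled one, and $F_2$ the flag in which it is not. For a triangle-free 2-graph $G$ and $v \in V(G)$, writing $f_i(v) := d(F_i, G; v)$ for the rooted flag densities, one has $f_1(v) + f_2(v) = 1$, and $\mathbb{E}_v[f_1(v)]$ equals the ordinary edge density $d(K_2, G)$. The triangle-freeness of $G$ is encoded in the observation that any two unlabelled neighbours of $v$ in $G$ must themselves be non-adjacent, so the $\sigma$-flag product $F_1 \cdot F_1$ records precisely the density of an induced $P_3$ centred at the labelled vertex.

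The crux is then a single Cauchy--Schwarz (positive-semidefinite) inequality in the flag algebra, namely $\mathbb{E}_v[f_1(v)^2] \ge \bigl(\mathbb{E}_v[f_1(v)]\bigr)^2 = d(K_2, G)^2$; after unlabelling, the left-hand side translates in the triangle-free case into $d(P_3, G)/3$, giving $d(P_3, G) \ge 3\, d(K_2, G)^2 + o(1)$. On the other hand, writing $p_i$ for the density of $3$-subsets of $V(G)$ spanning exactly $i$ edges (so $p_0 + p_1 + p_2 = 1$ and $p_2 = d(P_3, G)$ in the triangle-free setting), one has the elementary identity $d(K_2, G) = (p_1 + 2 p_2)/3$. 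Combining these with $p_1 \ge 0$ yields $3\, d(K_2, G) \ge 2\, d(P_3, G) \ge 6\, d(K_2, G)^2 + o(1)$, whence $d(K_2, G) \le 1/2 + o(1)$ and $\pi(K_3^{(2)}) \le 1/2$.

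The main obstacle I anticipate is not mathematical but notational: pinning down the precise combinatorial weights in the unlabelling step so that the asymptotic identity $\mathbb{E}_v[f_1(v)^2] = d(P_3, G)/3 + o(1)$ is derived unambiguously from triangle-freeness, together with formalising in what sense the product $F_1 \cdot F_1$ equals the averaged square of $f_1$. This is precisely the bookkeeping the flag algebra formalism is designed to automate, and the semidefinite certificate here reduces to a single $1 \times 1$ positive-semidefinite constraint. I would expect the subsequent exposition to verify this calculation in detail before the apparatus is applied to genuinely difficult $3$-graph problems.
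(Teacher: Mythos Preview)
Your argument is correct, but it is not the paper's argument and, more importantly, it is not an instance of the flag algebra template the paper is illustrating. The paper works literally within its framework: it chooses the $2\times 2$ positive semi-definite matrix
\[
Q=\tfrac{1}{2}\begin{pmatrix}1&-1\\-1&1\end{pmatrix},
\]
computes the flag pair densities $d_{F_i,F_j}(H_k)$ for each admissible $H_k$, and extracts a \emph{linear} inequality $\tfrac{1}{2}p_0-\tfrac{1}{6}p_1-\tfrac{1}{6}p_2\ge o(1)$ among the $3$-vertex subgraph densities; adding this to $d(G)=\tfrac{1}{3}p_1+\tfrac{2}{3}p_2$ brings every flag algebra coefficient to at most $\tfrac12$. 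Your route instead invokes Jensen's inequality $\mathbb{E}_v[f_1(v)^2]\ge (\mathbb{E}_v[f_1(v)])^2$, obtaining the \emph{quadratic} relation $p_2\ge 3d^2+o(1)$, and then combines it with $p_1\ge 0$. That is essentially the classical Cauchy--Schwarz proof of Mantel's theorem in flag clothing, and it is arguably slicker here.

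The one point that needs correcting is your description of this as ``a single $1\times 1$ positive-semidefinite constraint'' in the paper's sense. In the machinery of Section~2.2, a $1\times 1$ PSD matrix on the flag $F_1$ yields only the trivial $\mathbb{E}_v[f_1(v)^2]\ge 0$; the variance inequality you use compares an averaged square with a squared average, which is not of the form $\mathbb{E}_\theta[\text{fixed nonnegative quadratic form in }d_F^\theta]\ge 0$. The whole purpose of this section is to show how a nontrivial choice of $Q$ produces linear density inequalities that can be fed to an SDP solver, and the $2\times 2$ matrix above is the minimal example of that mechanism; your Jensen step, while perfectly valid, sidesteps exactly the point being demonstrated.
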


What would be the crudest possible way of finding a nontrivial upper bound on $\pi(K_3^{(2)})$? We could observe that
a triangle-free graph $G$ on $n$ vertices is at most as dense as the most dense subgraph of order $m\leq n$ that it contains. Note that as $G$ is triangle-free, so are its subgraphs. Say therefore that a subgraph is \emph{admissible} if it is triangle-free and so could occur as a subgraph of $G$. Pick some integer $m$, and let $\mathcal{H}$ denote the collection of all \emph{admissible subgraphs} of order $m$ up to isomorphism. We then have
\begin{equation}\label{eq1}
d(G)= \sum_{H \in \mathcal{H}} d_H(G) d(H)
\end{equation}
with $\sum_{H \in \mathcal{H}} d_H(G)=1$, and thus 
\begin{equation}\label{eq2}
d(G)\leq \max_{H \in \mathcal{H}} d(H).
\end{equation}

This is fairly obviously a poor way to go about bounding $\pi(K_3^{(2)})$. Indeed pick for example $m=3$. The family $\mathcal{H}$ then consists of three graphs $H_0,H_1, H_2$, with $H_i$ being the unique (up to isomorphism) graph on 3 vertices with exactly $i$ edges. Thus (\ref{eq2}) shows $\pi(K_3^{(2)})\leq 2/3$, but this could only be sharp if \emph{all} induced subgraphs of order $3$ were isomorphic to $H_2$. This is impossible for $n\geq 5$. Indeed,
 suppose we have $x, y$ with $xy$ a non-edge, and $a,b,c$ such that $\{xya\},\{xyb\}$ and $\{xyc\}$ all induce copies of $H_2$ in $G$. Then as $G$ is triangle-free, $\{abc\}$ must induce a copy of $H_0$. We therefore expect the density of $H_0$ in $G$ to be bounded below by some function of $H_2$ (the density of $H_1$ being determined by the fact that $\sum_i d_{H_i}(G)=1$). Thus one way we could try to refine inequality~(\ref{eq2}) would be to take such a relationship and exploit it to improve our bound.

The simplest relationship of this kind we could hope for is a linear inequality for subgraph densities of the form
\[\sum_{H \in \mathcal{H}} d_{H}(G) a_H \geq 0.\]
Given such an inequality, inequality~(\ref{eq2}) can be changed to 
\[d(G) \leq \max_{H \in \mathcal{H}}\left(d(H)+a_H\right).\] 
Provided our linear inequality is `good', the $a_H$ `even out' the coefficients $d(H)+a_H$ by transferring weight from dense subgraphs to sparser ones, improving on~(\ref{eq2}).

Following this line of thought, we then ask ourselves: how can we produce (good) linear inequalities for subgraph densities? Our remark on the fact that we cannot pack a graph full of induced copies of $H_2$ suggests a possible answer: we can consider the ways in which different kinds of subgraphs can intersect, and from this information derive bounds on subgraph densities. What Razborov's flag algebra calculus gives us is an efficient formalism for doing just that, which we now present.

Suppose we work in the general framework of $\mathcal{F}$-free graphs. (Our example had $\mathcal{F}=\{K_3^{(2)}\}$.) Let $m$ be an integer, which we shall fix later on, and let $\mathcal{H}$ denote as before the set of all (up to isomorphism) admissible subgraphs of order $m$.

An \emph{intersection type} is a graph on a labelled vertex set, with every vertex having a distinct label. Given an intersection type $\sigma$, a \emph{$\sigma$-flag} is an admissible graph $F$ on a partially labelled vertex set such that the subgraph induced by the labelled vertices is a copy of $\sigma$ (with identical labels for the vertices.) For example, let us consider the intersection type $\sigma$ consisting of a single vertex labelled `1'. Then there are (up to isomorphism) two $\sigma$-flags of order $2$, namely $F_0$ consisting of a non-edge with one end labelled `1', and $F_1$ consisting of an edge with one end labelled `1' (see Figure \ref{typeandflags}.) We shall write $\mathcal{F}_{\sigma}^l$ for the collection of all (up to isomorphism) $\sigma$-flags of order $l$.

\begin{figure}
\begin{center}
\begin{tikzpicture}
\draw[black,fill=black] (0,0) circle (2pt) (1,0) circle (2pt) (0.5,\sinsixty) circle(2pt)
(3,0) circle (2pt) -- (4,0) circle (2pt) (3.5,\sinsixty) circle(2pt)
(6,0) circle (2pt) -- (6.5,\sinsixty) circle(2pt) -- (7,0) circle (2pt);
\draw (0.5,-0.5) node {$H_0$}
(3.5,-0.5) node {$H_1$}
(6.5,-0.5) node {$H_2$};
\end{tikzpicture}
\end{center}
\caption{The admissible graphs.}
\label{admissiblegraphs}
\end{figure}
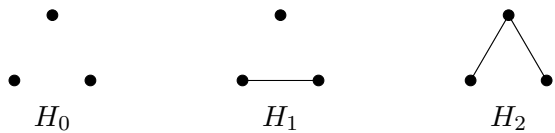

Let us now define some flag densities. Fix an intersection type $\sigma$ of order $\size{V(\sigma)}=s$ and an integer $l\geq s$. Given a graph $G$, select a partial labelling of $V(G)$ with the labels from $\sigma$, chosen uniformly at random. (By which we mean: randomly select $\size{V(\sigma)}$ vertices and assign them distinct labels from $\sigma$.) This makes $G$ into a potential $\sigma$-flag. Note that the labelled vertices could fail to induce a copy of $\sigma$, and that we allow this. Now select a set $S_1$ of $l-s$ other vertices (necessarily unlabelled) uniformly at random. Taken together with the labelled vertices, $S_1$ gives us a potential $\sigma$-flag of order $(l-s)+s=l$; so, given $F \in \mathcal{F}_{\sigma}^l$, write $d_{F}(G)$ for the probability this is a copy of $F$. We call this the \emph{flag density} of $F$ in $G$.

Having selected $S_1$, pick a disjoint set $S_2$ of $l-s$ unlabelled vertices uniformly at random. Taken together with the partially labelled vertices, $S_1$ and $S_2$ give us two potential $\sigma$-flags of order $l$. Then, given $F,F' \in \mathcal{F}_{\sigma}^l$, let $d_{F,F'}(G)$ be the probability $S_1$ and $S_2$ induce copies of $F$ and $F'$ respectively when taken together with the labelled vertices. We call $d_{F,F'}(G)$ the \emph{flag pair density} of $(F,F')$ in $G$.

Finally, for a fixed partial labelling $\theta$ of $V(G)$ with labels from $\sigma$, select an $(l-s)$-set $S_1$ from the unlabelled vertices of $G$ uniformly at random, and write $d^{\,\theta}_{F}(G)$ for the probability $S_1$ together with the vertices labelled by $\theta$ induces a copy of the the $\sigma$-flag $F$. Then select a disjoint $(l-s)$-set $S_2$ uniformly at random from the remaining unlabelled vertices and write $d^{\,\theta}_{F,F'}(G)$ for the probability $S_1$ and $S_2$ induce copies of $F$ and $F'$ respectively when taken together with the vertices labelled by $\theta$.

In our running example with $\sigma$ consisting of a single vertex labelled `1', $d_{F_1}(G)$ measures the probability that if we randomly label a vertex $x$ in $V(G)$ and randomly select $y \in V(G) \setminus \{x\}$ then $xy \in E(G)$---in other words, $d_{F_1}(G)$ is exactly the edge-density of $G$. On the other hand, $d_{F_1, F_0}(G)$ measures something slightly more complicated: letting $n=\size{V(G)}$ and writing $d(v)$ for the degree of $v$ in $G$, we have
\[d_{F_1,F_0}(G)= \sum_{v \in V(G)}\frac{1}{n}\left(\frac{d(v)}{n-1}\right) \left(\frac{n-1 -d(v)}{n-2}\right).\]
More interesting from a combinatorial perspective is
\[d_{F_1,F_1}(G)= d_{H_2}(G)/3 +d_{K_3^{(2)}}(G),\]
which in a triangle-free graph measures the $H_2$ density (divided by $3$). 

Now, let us fix $\sigma, l$ and make two easy observations. Firstly, if $n=\size{V(G)}$ is sufficiently large, then picking two random extensions of order $l-s$ for a randomly labelled set of $s$ vertices is essentially the same thing as picking a random pair of \emph{disjoint} extensions---indeed the probability that two randomly chosen $(l-s)$-sets from $V(G)$ intersect is $O(1/n)$.

\begin{observation}\label{indep}
For all $F,F' \in \mathcal{F}_{\sigma}^l$, for all possible partial labellings $\theta$ of $V(G)$ with labels from $\sigma$,
\[d^{\,\theta}_F(G) d^{\,\theta}_{F'}(G) = d^{\,\theta}_{F,F'}(G) +O\left(1/n\right).\]
In particular, taking expectations over $\theta$ on both sides, we have
\[\E_{\theta} d^{\,\theta}_{F}(G)d^{\,\theta}_{F'}(G) = d_{F,F'}(G) +O\left(1/n\right).\]
\end{observation}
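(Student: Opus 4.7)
The plan is to view both sides as probabilities arising from sampling two $(l-s)$-sets of unlabelled vertices in $G$, and observe that the two natural sampling procedures---independent versus conditionally disjoint---differ only by $O(1/n)$.

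First I would fix a partial labelling $\theta$. By definition, $d^{\,\theta}_F(G)\, d^{\,\theta}_{F'}(G)$ is the probability that two \emph{independent} uniformly random $(l-s)$-sets $S_1, S_2$, drawn from the $n-s$ unlabelled vertices of $V(G)$, yield copies of $F$ and $F'$ respectively when adjoined to the $\theta$-labelled vertices. On the other hand, $d^{\,\theta}_{F,F'}(G)$ is the probability of this same event when $(S_1, S_2)$ is instead sampled uniformly from the set of ordered pairs of \emph{disjoint} $(l-s)$-subsets. A union bound gives
\[\Pr\bigl[S_1 \cap S_2 \ne \emptyset\bigr] \le \frac{(l-s)^2}{n-s}= O(1/n),\]
since $l$ and $s$ are fixed constants. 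Because both events lie in $[0,1]$ and the two sampling distributions agree on the disjoint case up to a factor of $1 - O(1/n)$, the contribution from the coincident case is $O(1/n)$, yielding the pointwise estimate with an implicit constant depending only on $l$ and $s$.

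To obtain the second statement I would take expectations over a uniformly random labelling $\theta$. By construction, $\mathbb{E}_{\theta}\, d^{\,\theta}_{F,F'}(G) = d_{F,F'}(G)$, since sampling $\theta$ uniformly and then a disjoint ordered pair $(S_1, S_2)$ is precisely the joint procedure used in the definition of $d_{F,F'}(G)$. As the $O(1/n)$ error in the first step is uniform in $\theta$, it survives taking expectations. There is no substantive obstacle: the only care needed is to check that the implicit constant does not depend on $\theta$ or on the particular flags $F, F'$, which is immediate, as $\size{\mathcal{F}_\sigma^l}$ and $l-s$ are bounded in terms of the fixed parameters of the set-up.
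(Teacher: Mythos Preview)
Your proof is correct and follows exactly the approach sketched in the paper, which simply remarks (immediately before the observation) that ``the probability that two randomly chosen $(l-s)$-sets from $V(G)$ intersect is $O(1/n)$.'' You have accurately fleshed out this one-line justification: the key identification of independent sampling versus disjoint sampling, the union bound on the intersection probability, and the uniformity of the error in $\theta$ are all what the paper intends.
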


Secondly, we can average:

\begin{observation}\label{average}
Let $m$ be any integer with $m\geq 2l-s$, and let $\mathcal{H}$ be the family of all (up to isomorphism) admissible subgraphs of order $m$ defined earlier. Then for all $F,F' \in \mathcal{F}_{\sigma}^l$,
\[d_{F,F'}(G)= \sum_{H \in \mathcal{H}} d_H(G) d_{F,F'}(H).\]
\end{observation}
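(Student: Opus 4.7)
My plan is to prove Observation~\ref{average} by a two-step sampling argument, viewing both sides of the identity as probabilities attached to the same underlying random experiment performed in two different orders.

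First I would rewrite the left-hand side $d_{F,F'}(G)$ according to its definition: it is the probability that when we pick a uniformly random partial labelling $\theta$ of $V(G)$ using $s$ labels from $\sigma$ and then two disjoint uniformly random unlabelled $(l-s)$-sets $S_1,S_2$, the sets $S_1$ and $S_2$ together with $\theta$ induce copies of $F$ and $F'$ respectively. Altogether this experiment uses at most $s+2(l-s)=2l-s$ vertices of $G$, which by hypothesis is at most $m$. This leaves room to slip in an intermediate sampling step.

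The key step is to generate the same experiment by first choosing a uniformly random $m$-subset $U\subseteq V(G)$, and then, conditional on $U$, performing the flag-pair sampling inside the induced subgraph $G[U]$: pick a uniformly random partial labelling of $V(U)$ with the labels from $\sigma$, then two disjoint uniformly random unlabelled $(l-s)$-subsets of $V(U)$. Because the $s+2(l-s)$ vertices involved are, by symmetry, exchangeable within $V(G)$, this produces the same joint distribution on labelled $(s+2(l-s))$-point configurations as the original experiment. In particular the conditional probability of seeing $F$ and $F'$ given $U$ is exactly $d_{F,F'}\bigl(G[U]\bigr)$, which depends on $U$ only through the isomorphism type of $G[U]$.

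Now I would apply the law of total probability and group $U$'s according to the isomorphism type of $G[U]$. For each admissible $H\in\mathcal{H}$, the proportion of $m$-subsets $U$ with $G[U]\cong H$ is by definition $d_H(G)$, and on all such $U$ we have $d_{F,F'}(G[U])=d_{F,F'}(H)$. Summing yields
\[d_{F,F'}(G)=\sum_{H\in\mathcal{H}} d_H(G)\, d_{F,F'}(H),\]
as desired. There is no real obstacle here: the only thing to watch is that $m\geq 2l-s$ is precisely what guarantees that the intermediate $m$-subset is large enough for the inner flag-pair sampling to make sense, so that the equality is exact and not merely asymptotic (in contrast with Observation~\ref{indep}).
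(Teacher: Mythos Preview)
Your proof is correct. The paper does not actually supply a proof of this observation---it is stated as self-evident averaging---so there is nothing to compare against; your two-step sampling argument via a uniformly random intermediate $m$-set is exactly the standard justification, and you correctly identify that the hypothesis $m\geq 2l-s$ is what makes the identity exact rather than asymptotic.
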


The appearance of the $d_H(G)$ terms in Observation~\ref{average} suggests we are close to achieving our goal. And indeed, let $Q$ be any fixed positive semi-definite $\vert \mathcal{F}_{\sigma}^l\vert \times \vert \mathcal{F}_{\sigma}^l\vert$ matrix with entries indexed by $\mathcal{F}_{\sigma}^l$. Then
\begin{align}
0 &\leq  \E_{\theta} \sum_{F,F' \in \mathcal{F}_{\sigma}^l} Q_{F,F'} d^{\,\theta}_{F}(G) d^{\,\theta}_{F'}(G) && \text{(by positive semi-definiteness)}\notag \\
& = \sum_{F,F' \in \mathcal{F}_{\sigma}^l}Q_{F,F'} d_{F,F'}(G) +O(1/n) && \text{(by Observation~\ref{indep})} \notag \\
& = \sum_{F,F' \in \mathcal{F}_{\sigma}^l}Q_{F,F'} \sum_{H \in \mathcal{H}} d_H(G) d_{F,F'}(H) + O(1/n) && \text{(by Observation~\ref{average})} \notag \\
&=\sum_{H \in \mathcal{H}} d_H(G) \left(\sum_{F,F' \in \mathcal{F}_{\sigma}^l} Q_{F,F'} d_{F,F'}(H)\right) +O(1/n) \label{singletype} \end{align}
by changing order of summation again in the last line.

This is of the desired form $0 \leq \sum_{H \in \mathcal{H}} d_H(G) \lambda_H +O(1/n)$ (the $O(1/n)$ error term being irrelevant when bounding the Tur\'an density). Thus for a fixed $m$, every choice of $\sigma$ and $l$ such that $2l- \size{V(\sigma)} \leq m$, and positive semi-definite matrix $Q$, gives us some linear inequality between subgraph densities for admissible subgraphs of order $m$. We can then sum these inequalities together. For example, if we have $r$ choices
\[(\sigma_1, l_1, Q_1), (\sigma_2, l_2, Q_2), \dots, (\sigma_r, l_r, Q_r),\]
we can add the corresponding inequalities~(\ref{singletype}) to get
\[0 \leq \sum_{H \in \mathcal{H}} d_H(G) \left(\sum_{i=1}^r\sum_{F,F' \in 
\mathcal{F}(\sigma_i,l_i)} \left(Q_i\right)_{F,F'} d_{F,F'}(H)\right) +O(1/n).\]
With a view to getting the best possible improvement of~(\ref{eq2}), we can, for a fixed choice of $(\sigma_1, l_1)$, $(\sigma_2, l_2)$, $\dots$, $(\sigma_r, l_r)$, optimise the choice of the matrices $Q_1, Q_2, \dots , Q_r$ to obtain a `best inequality possible': 
\[0 \leq \sum_{H \in \mathcal{H}}d_H(G) a_H +O\left(1/n\right),\]
where
\[a_H= \sum_{i=1}^r\sum_{F,F' \in \mathcal{F}(\sigma_i,l_i)} \left(Q_i\right)_{F,F'} d_{F,F'}(H). \]
We can add this to (\ref{eq1}) to get
\begin{equation} d(G_n) \leq \sum_{H \in \mathcal{H}} d_H(G_n) \left(d(H)+a_H\right)+O\left(1/n\right), \label{prefab} \end{equation}
and thus obtain a bound on the Tur\'an density of our family $\mathcal{F}$ of forbidden subgraphs, 
\begin{equation} \pi(\mathcal{F}) \leq \max_{H \in \mathcal{H}} \left(d(H)+a_H\right). \label{fab} \end{equation}
We refer to (\ref{fab}) as the \emph{flag algebra bound}, and for each $H \in \mathcal{H}$ we call $d(H)+a_H$ the \emph{flag algebra coefficient} of $H$ in the bound.

At this point, let us make two important observations:

\begin{lemma}\label{tightcoeff}
Suppose the flag algebra bound is tight, i.e{.}
\[\pi(\mathcal{F})=\max_{H} \left(d(G)+a_H\right),\]
and there is an admissible subgraph $H'$ whose flag algebra coefficient is $\rho$ where $\rho < \pi(\mathcal{F})$. Then, for any sequence of $\mathcal{F}$-free graphs $(G_n)_{n \in \mathbb{N}}$ with $\size{V(G_n)}=n$ and $e(G_n)= \left(\pi(\mathcal{F})+o(1)\right)\binom{n}{2}$, we have
\[\limsup_{n \rightarrow \infty} d_{H'}(G_n)=0.\]
\end{lemma}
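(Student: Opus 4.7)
The plan is to read the statement off directly from the master inequality~(\ref{prefab}) by isolating the contribution of $H'$ and invoking the assumption $d(G_n)\to \pi(\mathcal{F})$ along the given sequence. Writing $\gamma_H := d(H) + a_H$ for the flag algebra coefficient of $H$, tightness of the bound means $\gamma_H \leq \pi(\mathcal{F})$ for every $H \in \mathcal{H}$, with equality attained by the maximiser; by hypothesis $\gamma_{H'} = \rho < \pi(\mathcal{F})$.

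Using $\sum_{H \in \mathcal{H}} d_H(G_n) = 1$, I would split the right-hand side of~(\ref{prefab}) according to whether $H = H'$ or not, giving the pointwise estimate
\[
\sum_{H \in \mathcal{H}} d_H(G_n)\gamma_H \;\leq\; \bigl(1 - d_{H'}(G_n)\bigr)\pi(\mathcal{F}) + d_{H'}(G_n)\rho \;=\; \pi(\mathcal{F}) - \bigl(\pi(\mathcal{F}) - \rho\bigr)\,d_{H'}(G_n).
\]
Combining this with~(\ref{prefab}) yields
\[
d(G_n) \;\leq\; \pi(\mathcal{F}) - \bigl(\pi(\mathcal{F}) - \rho\bigr)\,d_{H'}(G_n) + O(1/n),
\]
and rearranging (using $\pi(\mathcal{F}) - \rho > 0$) gives
\[
d_{H'}(G_n) \;\leq\; \frac{\pi(\mathcal{F}) - d(G_n)}{\pi(\mathcal{F}) - \rho} + O(1/n).
\]
The hypothesis $e(G_n) = (\pi(\mathcal{F}) + o(1))\binom{n}{2}$ translates to $d(G_n) = \pi(\mathcal{F}) + o(1)$, so the right-hand side above tends to $0$; since $d_{H'}(G_n) \geq 0$, this forces $\limsup_{n\to\infty} d_{H'}(G_n) = 0$, as required.

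The argument is essentially a one-line rearrangement, so there is no real obstacle; the only subtlety worth flagging is bookkeeping of the $O(1/n)$ error term that arose in Observation~\ref{indep} and was inherited by~(\ref{prefab}). This is harmless because the family $\mathcal{F}$, the choice of types and flags $(\sigma_i, l_i)$, the positive semi-definite matrices $Q_i$, and hence the gap $\pi(\mathcal{F}) - \rho$, are all fixed independently of $n$, so the implicit constant in $O(1/n)$ does not depend on the particular $G_n$ in the sequence.
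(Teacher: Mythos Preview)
Your proof is correct and follows essentially the same approach as the paper. The paper phrases it as a contradiction (supposing $\limsup d_{H'}(G_n)>\varepsilon$ and deriving $d(G_n)<\varepsilon\rho+(1-\varepsilon)\pi(\mathcal{F})+O(1/n)$), whereas you rearrange directly to isolate $d_{H'}(G_n)$; the underlying inequality and the use of $\sum_H d_H(G_n)=1$ together with $\gamma_H\le\pi(\mathcal{F})$ are identical.
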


\begin{proof}
Indeed, suppose $\limsup_{n \rightarrow \infty} d_{H'}(G_n)>\varepsilon$ for some fixed $\varepsilon>0$. Then by (\ref{prefab}), we have
%Then for arbitrarily large $n$ we have
%\[d(G_n) \leq \sum_{H \in \mathcal{H}} d_H(G_n) \left(d(H)+a_H\right)+O\left(1/n\right)\]
%with
%\[\sum_{H \in \mathcal{H}} d_H(G_n)=1,\]
%and $d_{H'}(G_n) > \varepsilon$. Thus
\[ d(G_n) < \varepsilon \rho + (1 - \varepsilon)\pi(\mathcal{F}) + O(1/n)\]
which is less than $\pi(\mathcal{F})$ for $n$ large enough, a contradiction.

\end{proof}

Similarly, a consequence of requiring the flag algebra bound to be tight is that
\begin{equation} \sum_{F,F' \in \mathcal{F}_{\sigma}^l} \E_{\theta} Q_{F,F'} d^{\,\theta}_{F}(G) d^{\,\theta}_{F'}(G)=O(1/n)\label{limevec} \end{equation}
for all our optimised choices of $(\sigma, l, Q)$ and graphs $G$ that are `close' to being extremal.

Additionally, if the problem is stable with a blow-up or iterated blow-up construction of some finite $3$-graph being best possible, let us consider for a moment the `limit'  of a sequence of extremal configurations $G_n$ as $n\rightarrow \infty$. For all $F \in \mathcal{F}_{\sigma}^l$, the quantity $d_{F}^{\,\theta}(G_n)$ is determined (up to $o(1)$) by the parts in which we set the labelled vertices; in particular if $\theta$ and $\theta'$ place the same labels in the same parts, then  $d_{F}^{\,\theta}(G_n)=d_{F}^{\,\theta'}(G_n)+o(1)$ for all choices of $F$, and we may treat $\theta$ and $\theta'$ as being `equivalent'. We can reduce in this way the set of all partial labelings into a finite set of `equivalence' classes.

To illustrate this informal discussion with an example, suppose the extremal configurations $G_n$ consist of complete balanced bipartite graphs and that $\vert V(\sigma) \vert=2$. Then there are two `equivalence' classes of partial labelings: one in which both labelled vertices are put in the same part of $G_n$, and one in which the labelled vertices are assigned to different parts of $G_n$.

Now for each `equivalence' class, choose a sequence of distinct representatives, i.e{.} a sequence of partial labelings of $n$-vertex extremal configurations, and write $U$ for the finite set of sequences thus defined. Since $Q$ is positive semi-definite, we have the following:

\begin{remark}[Baber~\cite{BaberThesis}] \label{zeroevec}
Suppose the flag algebra bound is tight, and that the problem admits a blow-up or an iterated blow-up as the stable extremal configuration. Let $(\sigma, l, Q)$ be one of our optimised choices of intersection type, flag order and matrix. Then for any $(\theta_n)_{n \in \mathbb{N}} \in U$, where $U$ is the set of sequences of partial labelings informally defined above,
\[\lim_{n \rightarrow \infty} \sum_{F,F' \in \mathcal{F}_{\sigma}^l}Q_{F,F'}d^{\,\theta_n}_{F}(G_n)d^{\,\theta_n}_{F'}(G_n)=0.\]
\end{remark}

In other words, the vectors of flag densities associated with a fixed embedding of $\sigma$ in a large extremal configuration accumulate around the set consisting of the zero vector and of the zero eigenvectors of the positive semi-definite matrix $Q$. This remark was first made in a more formal infinitary setting by Baber~(Lemma 2.4.4 in~\cite{BaberThesis}), to whom we refer the reader for a rigorous proof.

Having made these two observations, let us return to our running example as an illustration of how Razborov's method is used to provide upper bounds for Tur\'an densities. Recall that we are trying to show $\pi(K_3^{(2)}) \leq 1/2 $ using the flag algebra calculus. In this case consideration of one intersection type suffices, namely the type $\sigma$ consisting of a single labelled vertex. We have two $\sigma$-flags of order $2$, $F_0$ and $F_1$, and three admissible subgraphs of order $3$, $H_0, H_1$ and $H_2$ (see Figures \ref{admissiblegraphs} and \ref{typeandflags}). Let us compute $d_{F,F'}(H)$ for all possible choices of $F,F'$ and $H$.

\begin{figure}
\begin{center}
\begin{tikzpicture}
\draw[black,fill=black] (0,0) circle (2pt) node [above] {$1$}
(2,0) circle (2pt) node [above] {$1$} (3,0) circle (2pt)
(5,0) circle (2pt) node [above] {$1$} -- (6,0) circle (2pt);
\draw (0,-0.5) node {$\sigma$}
(2.5,-0.5) node {$F_0$}
(5.5,-0.5) node {$F_1$};
\end{tikzpicture}
\end{center}
\caption{The intersection type $\sigma$ and $\sigma$-flags $F_0$ and $F_1$.}
\label{typeandflags}
\end{figure}
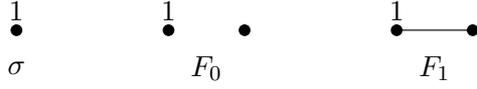

Since our intersection type $\sigma$ consists of a single vertex, our random labelling and our two random extensions always give us an ordered pair of $\sigma$-flags, so that $\sum_{F,F'} d_{F,F'}(H)=1$. Now it is easy to see that $d_{F_0, F_0}(H_0)=1$, and that $d_{F,F'}(H_0)=0$ for all other choices of $F,F'$. Next, we see that $d_{F_0, F_0}(H_1)=1/3$, as the only way of getting two copies of $F_0$ is to label the unique degree zero vertex in $H_1$ `1' (which happens a third of the time), and that with this labelling we always get two copies of $F_0$ in the randomly chosen extensions. Also $d_{F_1,F_1}(H_1)=0$ as $H_1$ contains only one edge, so that we have by symmetry $d_{F_0,F_1}(H_1)=d_{F_1,F_0}(H_1)=1/3$. We then get the $d_{F,F'}(H_2)$ for free by noting that $H_2$ is the complement of $H_1$ and $F_0$ is the complement of $F_1$, so that $d_{F_{\epsilon}, F_{\eta}}(H_2)= d_{F_{1-\epsilon}, F_{1-\eta}}(H_1)$, but we would encourage the reader to calculate these directly for herself instead. Summarising, we have:

\vspace{5mm}
\begin{center}
\begin{tabular}{cccc}
& \ $d_{F_0,F_0}(H)$ \ & \ $d_{F_0, F_1}(H)=d_{F_1,F_0}(H)$ \ & \ $ d_{F_1, F_1}(H)$ \ \\ \hline
$H_0$ & $1$ & $0$ & $0$ \\
$H_1$ & $1/3$ & $1/3$ & $0$ \\
$H_2$ & $0$ & $1/3$ & $1/3$ \\ \hline
\end{tabular}
\end{center}
\vspace{5mm}

Now let 
\[Q=\left( \begin{matrix} a & b \\ c & d \end{matrix}\right)\]
 be a positive semi-definite matrix. (In other words $a,b,c,d$ satisfy $a \geq 0, \ ad-bc\geq 0$.) 
Then in any triangle-free graph $G$ of order $n$, 
\[ d(G)\leq d_{H_0}(G)\left(0+a_{H_0}\right)+d_{H_1}(G)\left(\frac{1}{3}+a_{H_1}\right)+d_{H_2}(G)\left(\frac{2}{3}+a_{H_2}\right) +O(1/n),\]
where the $a_{H_i}$ are the coefficients introduced earlier, given by
\begin{align*}
a_{H_0} & = a\\
a_{H_1} & = a/3 + b/3 + c/3\\
a_{H_2} & = b/3 + c/3 + d/3.
\end{align*}

We now optimise the choice of $Q$. Guessing that extremal triangle-free graphs are complete bipartite, we expect by Lemma~\ref{tightcoeff} that both $H_0$ and $H_2$ must both have flag algebra coefficients equal to $1/2$ in a tight flag algebra bound; it is then a straightforward exercise in calculus to work out that
\[Q= \left( \begin{matrix} 1/2 & -1/2 \\ -1/2 & 1/2 \end{matrix}\right)\]
is an optimal choice of matrix.

Our optimal inequality is then
\[0 \leq \frac{d_{H_0}(G)}{2} -\frac{d_{H_1}(G)}{6}- \frac{d_{H_2}(G)}{6} +O\left(1/n\right),\]
 giving
\[d(G) \leq \frac{1}{2}\left(d_{H_0}(G)+d_{H_2}(G)\right) + \frac{1}{6}d_{H_1}(G) +O\left(1/n\right).\] 
Taking the limit as $n\rightarrow \infty$, we deduce that $\pi(K_3^{(2)})\leq 1/2$. Since a complete balanced bipartite graph achieves density $1/2+o(1)$, we in fact must have equality. We have thus proved Theorem~\ref{weak mantel}. (In fact we have proved a little more: our inequality tells us exactly which subgraphs can have positive density in an extremal example, and what those positive densities are, namely $d_{H_0}(G)=1/4+o(1)$ and $d_{H_2}(G)=3/4+o(1)$. This information can then be used to show that `close' to extremal triangle-free graphs are `close' to complete bipartite. However this goes beyond the scope of this exposition.)

In general it is not practical to do the optimisation above by hand (or indeed to perform manually all of the earlier calculations required to determine $\mathcal{H}, \mathcal{F}_{\sigma}^l$ and the $d_{F,F'}(H)$ terms), and this is where \flagmatic\ comes in: taking as input a set of forbidden configurations $\mathcal{F}$ and an integer $m$, it performs all the required computations, feeds the problem in an appropriate form into a semi-definite problem solver (SDP solver) then converts the SDP solver output into a bound on $\pi(\mathcal{F})$ and produces a `certificate' of the flag algebra calculation. We discuss all this in detail in the next subsection.

\subsection{Flagmatic}

All the upper bounds on Tur\'an densities that we give in this paper have been obtained by flag algebra calculations assisted by \flagmatic, the program written by the second author to implement the flag algebra method. In this subsection we make some remarks concerning \flagmatic, and, in particular, how it obtains exact solutions. Note that in the remainder of the paper, starting from this section, we shall write `graph' for `3-graph'.

\flagmatic\ takes as input a family of forbidden graphs $\mathcal{F}$, and an integer $m$. It then determines $\mathcal{H}$, the family of all admissible ($\mathcal{F}$-free) graphs of order $m$, up to isomorphism, and generates a set of intersection types and flags to use. By default, \flagmatic\ will use all intersection types $\sigma$ whose order is congruent to $m$ modulo 2. For each $\sigma$, \flagmatic\ takes $\mathcal{F}_{\sigma}^l$ with $l = \left(m - \size{V(\sigma)}\right)/2$ as its family of $\sigma$-flags. \flagmatic\ then computes the densities $d(H)$, for each $H \in \mathcal{H}$, and all the flag pair densities $d_{F,F'}(H)$ for all $H \in \mathcal{H}$ and all pairs $F,F' \in \mathcal{F}_{\sigma}^l$.

(It is not hard to show that if we use a type $\sigma$ of order $s$, where $s$ is not congruent to $m$ modulo 2, then we can achieve at least as good a bound by replacing $\sigma$ with all the types of order $s+1$ that contain $\sigma$ as a labelled subgraph. For this reason, if we include all types whose order is congruent to $m$ modulo 2, then the bound we get will be no worse than if we use all the types.)

\flagmatic\ uses the semi-definite program (SDP) solver `CSDP' \cite{csdp} to find symmetric matrices $Q_1, Q_2, \dots, Q_r$ that optimise the flag algebra bound (\ref{fab}). (Note that in the search for optimal matrices, we may assume that each $Q_i$ is symmetric, for otherwise we could replace $Q_i$ by $(Q_i + Q_i^T)/2$ without changing $a_H$.) As is standard for this kind of software, CSDP uses floating-point arithmetic, which presents us with a number of issues (see e.g{.}~\cite{goldberg}). Foremost of these is the fact that the (floating-point) bound thus obtained is neither exact nor entirely rigorous. \flagmatic\ offers two ways around this difficulty.

If the floating-point bound is not thought to be tight, then the simplest of the two ways is also the most appropriate: \flagmatic\ can perform a Cholesky decomposition of the matrices, and then round off each entry to the nearest rational, with denominators bounded by a suitable integer $q$ ($q={10}^8$ is the default, if the user does not supply a preference). In this way, a rational bound on the Tur\'an density $\pi(\mathcal{F})$ can be obtained rigorously. The said bound may appear to be slightly worse than the floating-point bound initially reported by \flagmatic, but in practice we may keep this discrepancy below ${10}^{-6}$ by choosing $q$ large enough.

On the other hand, if the floating-point bound first reported by \flagmatic\ is thought to be tight, and if we know a matching lower bound construction, then we can do better. Given a lower bound construction, \flagmatic\ will use it to construct zero eigenvectors of the positive semi-definite matrices found by the SDP solver. This is done by using Remark~\ref{zeroevec}. 
%(see also Lemma 2.4.4 in~\cite{BaberThesis}). 
So for each positive semi-definite matrix $Q$, assuming that all the zero eigenvectors can be obtained in this way,  we can factor out the zero eigenspace and write $Q$ as a product 
\[ Q = R \ Q' \ R^T \]
where $Q'$ is positive definite. Moreover, because the $R$ matrix can be constructed by considering, loosely speaking, `flag densities in the limit of an extremal configuration,' it can be constructed with rational entries. \flagmatic\ then rounds the entries of $Q'$ to nearby rationals, its choices being guided in a few cases by the conjectured value of $\pi(\mathcal{F})$. (The rounding procedure used by \flagmatic\ is somewhat unsophisticated, but we have found it to be sufficient for our purposes. More complicated methods of rounding are possible, for example one could try to minimise the Euclidean distance between the original floating-point matrix and the rounded matrix, as proposed in Section 2.4.2 of \cite{BaberThesis}.)

Since the floating-point matrix $Q'$ is positive definite, the `rounded off' matrix will also be positive definite, provided our approximation is sufficiently fine. (Indeed if the perturbation of the entries of $Q'$ introduced in the rounding-off process is too great, \flagmatic\ will report an error and ask to use larger denominators $q$.) Finally, to ensure that it is beyond doubt that the `rounded off' $Q'$ is positive definite, \flagmatic\ uses a change of basis (via Gaussian elimination) to put it in diagonal form. (The $R$ matrix is modified so that $Q=R \ Q' \ R^T$ is unchanged.)

Finally, \flagmatic\ will produce a certificate of the rigorous flag algebra bound~(\ref{fab}), of which more will be said in the next subsection. For more information about using \flagmatic, see \cite{usersguide}.

\subsection{Certificates}

One of the drawbacks of the flag algebra method is that computations rapidly become very involved. The number of distinct 3-graphs on $n$ vertices, up to isomorphism, for $n=1,2,\dots$ grows very rapidly:
\[1, \ 1, \ 2, \ 5, \ 34, \ 2136, \ 7013320, \ \dots\]
(sequence A000665 of \cite{oeis}), and the size of the family of admissible graphs increases at a comparable pace in most problems. In practical terms, this means that we cannot perform any flag algebra calculations with admissible graphs of order $m>7$, and that even for $m=6$ and $m=7$, many flag algebra calculations involve too many graphs to be easily verifiable by hand.

Different authors have used different ways of addressing this issue: some~\cite{grzesik, hirst, Razborov2} include lists of admissible graphs, intersection types, flags and large positive semi-definite matrices in the body of their papers; others~\cite{BaberTalbot} worked with matrices that were too large and admissible graphs that were too numerous for this to be a practical solution, and omitted them from their papers. Our calculations by and large fall in the latter category, and we will similarly omit long lists of data.

Instead, we have used \flagmatic\ to produce certificates for all the flag algebra calculations we perform. These certificates are available on our website
\begin{quote}
\website
\end{quote}
as well as in the ancillary files associated with the arXiv version of this paper.
The certificates are in the JSON format \cite{json}, which is designed to be human-readable. Let us give details of what they contain, and of how this may be used to verify our calculations.

\flagmatic\ uses the following notation for 3-graphs. First the order $n$ is given, followed by a colon and a (possibly empty) list of 3-edges, which are given as a string of numbers $x_1 y_1 z_1 x_2 y_2 z_2 \dots$. For example, ``\verb|3:|'' represents the 3-graph on 3-vertices with no edges, whilst ``\verb|4:123124134|'' and ``\verb|4:213214234|'' both represent $K_4^-$.

All numbers in the certificates are rational, and are either provided as fractions ``p/q'', or as integers. Symmetric matrices are given by the entries in their upper triangle, so that

\begin{center} \verb|[[1,0,0], [1,0], [1]]|
\end{center}

is the $3 \times 3$ identity matrix. Matrices that are not necessarily symmetric are given by their rows, with 

\begin{center} \verb|[[1,-2],[-5,3]]|
\end{center}

standing for the matrix
\[\begin{pmatrix} 1 & -2 \\ -5 & 3\\ \end{pmatrix}.\]

The certificates produced by \flagmatic\ contain the following information:
\begin{enumerate}
\item A description of the problem, specifying which $r$-graphs we are working with (in all our applications, $r=3$); what we are trying to maximise (in this paper, the density of 3-edges, referred to as ``\verb|3:123|'' in the certificate); and which configurations we are forbidding.
\item The bound obtained (a rational number).
\item The order $m$ of the admissible graphs we are working with; the number of admissible graphs of order $m$ (up to isomorphism); and a list of the admissible graphs in the \flagmatic\ notation.
\item The number of intersection types used; and a list of the intersection types in the \flagmatic\ notation.
\item A list of the number of flags for each intersection type (the first number in the list corresponding to the first intersection type listed, the second number corresponding to the second intersection type, and so on); and a list of the $\sigma$-flags for each type $\sigma$ (in \flagmatic\ notation, ordered by type as above).
\item A list of $Q'$ matrices (called ``\verb|qdash_matrices|'' in the certificate), one for each intersection type.
\item A list of $R$ matrices (called ``\verb|r_matrices|'' in the certificate), one for each intersection type.
\end{enumerate}

At this stage the reader way wonder why we are giving two matrices for each intersection type, rather than just one. Recall that for each intersection type $\sigma$ we must provide a positive semi-definite matrix $Q$ to use in inequality (\ref{singletype}). To ensure that there can be no doubt as to the positive semi-definiteness of the matrices it provides, \flagmatic\ gives two matrices $R$ and $Q'$ where $Q'$ is a positive definite \emph{diagonal} matrix and $R$ is a rectangular matrix. The matrix $Q$ is then computed as
\[ Q = R \, Q' \, R^T. \]
%As the names chosen suggest, ``qdash\_matrices'' are our $Q'$, while the ``r\_matrices'' are the the $R$ matrices.

Given all this information, what does one need to do to verify that the flag algebra calculation is indeed correct? There are four stages:
\begin{enumerate}
\item First of all, one needs to check that the family of admissible 3-graphs given in the certificate is indeed the family of \emph{all} admissible 3-graphs of order $m$.
\item For all admissible graphs $H$ and all intersection types $\sigma$, one then needs to compute the densities $d(H)$  and the flag pair densities $d_{F,F'}(H)$ for all each pair of $\sigma$-flags $(F,F')$.
\item Next, the $Q$ matrices must be computed from the $Q'$ and $R$ matrices.
\item Finally, one needs to substitute all these terms into inequality~(\ref{fab}) and check that the claimed bound is achieved.
\end{enumerate}

To assist with these tasks, we provide a separate checker program, available from the \flagmatic\ website, called ``inspect\_certificate.py''. This program is independent of \flagmatic, and only requires Python 2.6 or 2.7 to run. Given a certificate as input, it can do any of the following:

\begin{itemize}
\item Display the list of admissible graphs.
\item Display the types and flags.
\item Display the $Q'$ and $R$ matrices.
\item Compute and display the $Q$ matrices.
\item Compute and display the admissible graph densities.
\item Compute and display the flag pair densities.
\item Compute and display the flag algebra coefficients for each admissible graph.
\item Compute and display which admissible graphs have a flag algebra coefficient equal to the bound.
\end{itemize}

As mentioned earlier, the certificates for our results are available on the \flagmatic\ website, and in a data set included in our arXiv submission. 
Each certificate has a unique file name, which is given in the following table:

\begin{center} {\small
\begin{tabular}{ll}
Result & Certificate filename(s) \\
\hline
Theorem \ref{k4-c5f32} & \verb|k4-f32c5.js| \\
Theorem \ref{k4-f32} & \verb|k4-f32.js| \\
Theorem \ref{j4f32} & \verb|38.js| \\
Theorem \ref{j5f32} & \verb|638.js| \\
Theorem \ref{oddcyclesbounds} & \verb|k4-l5.js| and \verb|k4-f32l5.js| \\
Theorem \ref{weak sos conj} & \verb|k58i.js| \\
Theorem \ref{inducedk4-f32} & \verb|43if32.js| \\
Theorem \ref{k4 d3 bound} & \verb|k4j4.js| \\
Proposition \ref{k4-c5} \ & \verb|k4-c5.js| \\
Proposition \ref{itblowupbounds} \ & \verb|k4-.js|, \verb|c5.js| and \verb|blm.js| \\
\hline
\end{tabular} }
\end{center}

\section{Results}

\subsection{On the extremal theory of 3-graphs with independent neighbourhoods}

A 3-graph $G$ is said to have \emph{independent neighbourhoods} if for every $x,y \in V(G)$ the \emph{joint neighbourhood} \[\Gamma(x,y)=\{z: \ xyz \in E(G)\}\] of $x$ and $y$ is an edge-free set in $G$. This is equivalent to saying that $G$ contains no copy of $F_{3,2}$ as a subgraph, where $F_{3,2}=([5], \{123, 124, 125,345\})$. For reasons we shall elaborate on in Section 4.1, the extremal theory of 3-graphs with independent neighbourhoods is very amenable to flag algebra calculus-based investigations.

The first result we should mention is due to F\"uredi, Pikhurko and Simonovits~\cite{FurediPikhurkoSimonovits}, who established the Tur\'an density of $F_{3,2}$ (and in fact determined its Tur\'an number $\ex(n, F_{3,2})$ exactly).

\begin{theorem}[F\"uredi, Pikhurko, Simonovits]
\[\pi(F_{3,2})=4/9.\] \label{fps} \end{theorem}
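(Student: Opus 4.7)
The plan is to establish matching lower and upper bounds for $\pi(F_{3,2})$: an explicit construction for the lower bound, and the flag algebra calculus of Section 2.2 (adapted from 2-graphs to 3-graphs in the obvious way) for the upper bound. For the lower bound, take $V = A \sqcup B$ with $|A| = \lfloor n/3 \rfloor$ and $|B| = n - |A|$, and let $E(G)$ consist of all 3-sets with exactly one vertex in $A$ and two in $B$. Case analysis on the position of a pair $\{x,y\}$ confirms that the joint neighbourhood $\Gamma(x,y)$ is always independent: $\Gamma(x,y) = \emptyset$ if $x,y \in A$; $\Gamma(x,y) = A$ if $x,y \in B$, which is independent since no edge of $G$ lies in $A$; and $\Gamma(x,y) = B \setminus \{y\}$ if $x \in A$ and $y \in B$, which is independent since no edge of $G$ lies in $B$. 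Thus $G$ is $F_{3,2}$-free, and its edge density tends to $3 \cdot \frac{1}{3} \cdot \left(\frac{2}{3}\right)^2 = 4/9$, giving $\pi(F_{3,2}) \geq 4/9$.

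For the matching upper bound, feed $\mathcal{F} = \{F_{3,2}\}$ to \flagmatic\ with admissible graphs of order $m = 6$. The program enumerates the family $\mathcal{H}$ of $F_{3,2}$-free 3-graphs on $6$ vertices up to isomorphism, the intersection types $\sigma$ of orders $0, 2, 4$ together with their $\sigma$-flags of order $(m + |V(\sigma)|)/2$, and all the flag pair densities $d_{F,F'}(H)$; the SDP solver CSDP then produces positive semidefinite matrices $Q_\sigma$ optimising the bound (\ref{fab}), which we expect to return $4/9$ exactly. To promote the floating-point SDP output to a rigorous rational proof, apply the rounding procedure of Section 2.3: use Remark~\ref{zeroevec} to extract rational zero eigenvectors of each $Q_\sigma$ from the block structure of the extremal bipartite construction (weights $1/3$ and $2/3$), factor $Q_\sigma = R_\sigma Q_\sigma' R_\sigma^T$ with $Q_\sigma'$ positive definite on the complement of the zero eigenspace, round $Q_\sigma'$ to nearby rationals, and verify positive definiteness via Gaussian diagonalisation. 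Together with Lemma~\ref{tightcoeff}, this yields a rigorous certificate establishing $\pi(F_{3,2}) \leq 4/9$.

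The main obstacle is that the flag algebra bound at $m = 6$ might not be tight: if it returns something strictly above $4/9$, one must increase to $m = 7$, which pushes the combinatorial load from $2136$ 3-graphs to roughly $7 \times 10^6$ and may stretch \flagmatic{} to its limit. A secondary obstacle is the rational rounding step, since the extremal construction is asymmetric (weights $1/3$ and $2/3$ rather than a balanced blow-up): the rational zero eigenvectors supplied by Remark~\ref{zeroevec} must faithfully reflect these weights, and the rounded $Q_\sigma'$ must remain positive definite, which may force the denominator $q$ to be significantly larger than the default $10^8$. Given the paper's remark that the extremal theory of 3-graphs with independent neighbourhoods is very amenable to flag-algebra methods, I expect both obstacles to be manageable in practice, and the whole proof to reduce to a single \flagmatic{} certificate plus the short verification of the construction above.
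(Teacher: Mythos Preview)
The paper does not give its own proof of this theorem at all: it is stated as a known result of F\"uredi, Pikhurko and Simonovits~\cite{FurediPikhurkoSimonovits} and cited without argument. Their original proof is a direct combinatorial/stability argument, not a flag algebra computation, and in fact determines $\ex(n,F_{3,2})$ exactly. Notice also that the paper's table of \flagmatic\ certificates contains no entry for $F_{3,2}$ on its own.

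Your lower bound is fine and matches the known extremal construction (up to relabelling the parts: the paper's summary table writes it as $\size{A}\approx 2\size{B}$ with all edges of type $AAB$, which is your construction with $A$ and $B$ swapped). The verification that all joint neighbourhoods are independent is correct.

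Your upper bound, however, is not a proof but a plan. You describe what \flagmatic\ would do and state that you ``expect'' the SDP to return $4/9$; you then list two obstacles and say you ``expect both obstacles to be manageable in practice.'' None of the computation is carried out, no certificate is produced, and nothing rules out the possibility that $m=6$ is insufficient. A flag algebra proof of $\pi(F_{3,2})\le 4/9$ does exist, so the route is viable, but what you have written is a sketch of how one would obtain such a proof, not the proof itself. If you want to take this route, you must actually run the computation and supply (or at least verify) a certificate; otherwise, cite \cite{FurediPikhurkoSimonovits} for the upper bound as the paper does.
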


The next four results are new however.

\begin{theorem}\label{k4-c5f32}
\[\pi(K_4^-, C_5, F_{3,2})=12/49.\]
\end{theorem}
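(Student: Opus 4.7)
The strategy is to sandwich $\pi(K_4^-, C_5, F_{3,2})$ between a construction achieving density $12/49$ and a matching flag algebra upper bound.

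For the lower bound, I would propose the following blow-up construction. Consider the Paley tournament on $\mathbb{Z}_7$, in which $i\to j$ precisely when $j-i$ is a nonzero quadratic residue modulo $7$, and let $T$ be the $3$-graph on $[7]$ whose edges are the cyclic triangles of this tournament. Since every vertex of the Paley tournament has out-degree $3$, the standard identity gives $\size{E(T)}=\binom{7}{3}-7\binom{3}{2}=14$. A balanced blow-up of $T$ into seven parts of size $n/7$ then has density $6\cdot 14/7^3=84/343=12/49$. Next I would verify the three required properties of $T$, each exploiting the vertex- and arc-transitivity (indeed the ``double regularity'') of Paley$(7)$: (i) every pair $\{i,j\}$ of template vertices lies in exactly two cyclic triangles, so in the blow-up the joint neighbourhood of any two vertices spans at most two parts and therefore contains no $3$-edge, giving $F_{3,2}$-freeness; (ii) every $4$-subset of $\mathbb{Z}_7$ induces exactly two cyclic triangles in Paley$(7)$, giving $K_4^-$-freeness; and (iii) every $5$-subset spans only four cyclic triangles (using $\binom{5}{3}-\sum_v\binom{d^+(v)}{2}$ in the induced sub-tournament), so five template vertices never carry the five edges needed for a $C_5$. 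Since the blow-up inherits these properties, it is $\{K_4^-,C_5,F_{3,2}\}$-free and certifies $\pi(K_4^-,C_5,F_{3,2})\geq 12/49$. I would also check that replacing the blow-up by an iterated blow-up fails: iterating reintroduces $3$-edges inside the parts, so the joint neighbourhoods $A_{k_1}\cup A_{k_2}$ of distinct-part pairs are no longer independent, and $F_{3,2}$ appears.

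For the matching upper bound $\pi(K_4^-,C_5,F_{3,2})\leq 12/49$, I would invoke the flag algebra calculus of Section~2.2 via \flagmatic. With $\mathcal{F}=\{K_4^-,C_5,F_{3,2}\}$ and admissible-graph order $m=6$ (graphs of order $7$ being beyond the practical range), one asks \flagmatic\ to enumerate the $\mathcal{F}$-free admissible graphs on $m$ vertices, the intersection types of even order at most $m$, and the associated flags, then to feed the resulting SDP to CSDP to obtain semi-definite matrices $Q_1,\dots,Q_r$ optimising the flag algebra bound~(\ref{fab}). With the extremal construction in hand, one applies Remark~\ref{zeroevec}: vectors of flag densities evaluated at partial labellings of the blow-up of $T$ accumulate on the zero eigenspace of each $Q_i$, which lets \flagmatic\ factor $Q_i=R_iQ_i'R_i^T$ with $R_i$ rational and $Q_i'$ positive definite of smaller rank, suitable for rational rounding. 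The resulting rigorous rational bound is recorded in the certificate \verb|k4-f32c5.js| and is verifiable via \verb|inspect_certificate.py|.

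The principal obstacle will be the rounding step in the upper bound: the SDP solution must be sufficiently well-conditioned that after rounding the entries of $Q_i'$ to rationals with bounded denominator the matrices remain positive semi-definite \emph{and} certify exactly $12/49$, rather than some nearby value $12/49+\varepsilon$. This requires that $m$ be chosen large enough for the extremal construction to be ``visible'' to the flag algebra (so that each admissible subgraph appearing with positive density in the blow-up of $T$ attains the maximum flag algebra coefficient, as forced by Lemma~\ref{tightcoeff}), and that the zero-eigenvector decomposition extracted from the construction be complete. Identifying the Paley-tournament construction at the start is the main creative input; once it is known, the rest of the verification is largely automated.
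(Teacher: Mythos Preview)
Your approach coincides with the paper's: the cyclic-triangle $3$-graph of the Paley tournament on $\mathbb{Z}_7$ is precisely the paper's $H_7$. Indeed, the two translation-orbits of cyclic triangles are the translates of $\{1,2,4\}$ (the quadratic residues) and of $\{3,5,6\}$ (the non-residues), which are exactly the two edge-disjoint Fano planes $F_1,F_2$ of F\"uredi's description. The upper bound is likewise obtained via \flagmatic, so lower and upper bounds match the paper's proof.

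One correction is needed in your verification: claim (ii), that every $4$-subset of $\mathbb{Z}_7$ spans \emph{exactly} two cyclic triangles, is false (for example $\{0,1,2,4\}$ spans only one, and the average over all $4$-sets is $14\cdot 4/\binom{7}{4}=8/5$). What is true, and what you actually need, is that every $4$-vertex tournament contains \emph{at most} two cyclic triangles---equivalently, that the link of each vertex is the $6$-cycle on its three out-neighbours alternating with its three in-neighbours, hence triangle-free; this is the argument the paper gives. Your claim (iii), by contrast, is correct as stated: since $\mathrm{Aut}(\text{Paley}(7))$ is transitive on unordered pairs, every $5$-set (the complement of a pair) spans the same number $14\cdot\binom{4}{2}/\binom{7}{5}=4$ of cyclic triangles, too few for a $C_5$.
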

\begin{proof}

The upper bound is from a flag algebra calculation using \flagmatic\ (see Section 2.4 for how to obtain a certificate). The lower bound, which was independently obtained by F\"uredi~\cite{Mubayiemail}, comes from taking a balanced blow-up of the 6-regular 3-graph on $7$ vertices
\[H_7=([7], \{124,137,156,235,267,346,457,653,647,621,542,517,431,327\}).\]
The 3-graph $H_7$ can be obtained as the union of two edge-disjoint copies of the Fano plane on the same vertex set
\begin{align*}
F_1&=([7], \{124,137,156,235,267,346,457\}) \textrm{ and }\\
F_2 &=([7], \{653,647,621,542,517,431,327\}),
\end{align*}
as depicted in Figure \ref{furedifano}. This elegant perspective is due to F\"uredi~\cite{Mubayiemail}.

Another way to think about $H_7$ is by considering its link-graphs: for every $i \in [7]$, the link graph of $i$ in $H_7$ is a $6$-cycle, which is triangle-free (in fact bipartite). This instantly shows that a blow-up of $H_7$ is $K_4^-$-free. To see that $H_7$ and its blow-ups are $F_{3,2}$ free, it is enough to observe that for every $i\neq j$ in $[7]$, the codegree of $i$ and $j$ in $H_7$ is exactly $2$, which is not enough to support a 3-edge, so that their joint neighbourhood remains edge-free in the blow-up. Finally, to see that such a blow-up is $C_5$-free, note that $H_7$ is itself $C_5$-free, so that 5 vertices in distinct parts of the blow-up cannot span a $C_5$, while, on the other hand, a copy of $C_5$ in the blow-up cannot involve two vertices in the same part (since any two vertices of $C_5$ appear together in a 3-edge). 
\end{proof}

\begin{figure}
\begin{center}
\begin{tikzpicture}
\tikzstyle{blob} = [circle, draw, fill=white, text centered, inner sep=1pt]	
\draw[black] (3/2,\sinsixty) circle (\sinsixty);
\draw[black,fill=black] (0,0) circle (2pt) node[blob] {$5$} --
(3,0) circle (2pt) node[blob] {$6$} --
(3/2,3*\sinsixty) circle(2pt) node[blob] {$3$} -- (0,0)
(0,0) -- (30:\midfano) circle (2pt) node[blob] {$4$} (3,0) --
+(150:\midfano) circle (2pt) node[blob] {$2$}
(3/2,3*\sinsixty)--(3/2,0) circle (2pt) node[blob] {$1$}
(3/2,\sinsixty) circle (2pt) node[blob] {$7$};
\begin{scope}[xshift=4cm]
\draw[black] (3/2,\sinsixty) circle (\sinsixty);
\draw[black,fill=black] (0,0) circle (2pt) node[blob] {$2$} --
(3,0) circle (2pt) node[blob] {$1$} --
(3/2,3*\sinsixty) circle(2pt) node[blob] {$4$} -- (0,0)
(0,0) -- (30:\midfano) circle (2pt) node[blob] {$3$} (3,0) --
+(150:\midfano) circle (2pt) node[blob] {$5$}
(3/2,3*\sinsixty)--(3/2,0) circle (2pt) node[blob] {$6$}
(3/2,\sinsixty) circle (2pt) node[blob] {$7$};
\end{scope}
\end{tikzpicture}
\end{center}
\caption{F\"uredi's double Fano construction.} \label{furedifano}
\end{figure}
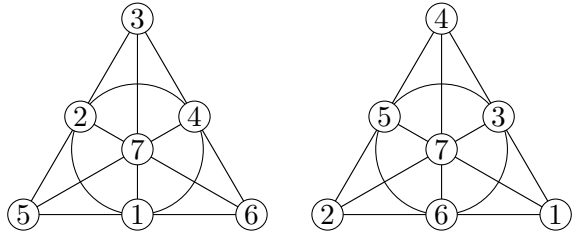

The next result is similar to an earlier theorem of Frankl and F\"uredi~\cite{FranklFuredi}, which we shall discuss in the next section, where we also show how our results differ.
\begin{theorem}\label{k4-f32}
\[\pi(K_4^-, F_{3,2})=5/18.\]
\end{theorem}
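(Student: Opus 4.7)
The plan is to mirror the two-pronged strategy already used for Theorem~\ref{k4-c5f32}: produce a matching lower bound by exhibiting an explicit construction, and obtain the upper bound by a flag algebra calculation assisted by \flagmatic.

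For the lower bound, I would search for a sequence of $\{K_4^-, F_{3,2}\}$-free 3-graphs $G_n$ with $d(G_n) \to 5/18$. The natural candidate is a (possibly weighted, possibly iterated) balanced blow-up of a small template 3-graph $H$; in the simplest case with $H$ on six vertices and ten edges a balanced blow-up gives edge density $6 \cdot 10/6^3 = 5/18$ in the limit, and this is the ansatz I would try first. To verify that such a blow-up is $K_4^-$-free it suffices to check that the link graph of every vertex of $H$ is triangle-free, and to verify $F_{3,2}$-freeness it suffices to check that no joint neighbourhood $\Gamma_H(x,y)$ spans a 3-edge, since the joint neighbourhoods of pairs in the blow-up are then just blow-ups of the joint neighbourhoods in $H$. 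Both checks are finite enumerations on $H$.

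For the upper bound $\pi(K_4^-, F_{3,2}) \le 5/18$, I would feed $\mathcal{F} = \{K_4^-, F_{3,2}\}$ together with admissible graphs of order $m = 7$ into \flagmatic\ (the order $m = 6$ is very likely too coarse; $m = 7$ is the smallest odd order compatible with the parity convention of Section~2.3 and has a reasonable chance of being tight). The SDP solver should return floating-point PSD matrices whose associated flag algebra bound~(\ref{fab}) is numerically very close to $5/18$. To convert this into a rigorous rational certificate matching $5/18$ exactly, I would use the lower bound construction to extract zero eigenvectors of each PSD matrix (Remark~\ref{zeroevec}), factor them out so that what remains is strictly positive definite, and then round to rationals using \flagmatic's built-in routines. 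The output is the certificate \verb|k4-f32.js|, independently verifiable by running \verb|inspect_certificate.py|.

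The main obstacle is the lower bound: identifying the correct template $H$ from scratch is nontrivial, and the construction cannot simply be an iterated blow-up of $K_4^-$ (which achieves $2/7 > 5/18$ for the $K_4^-$-problem alone but contains many copies of $F_{3,2}$). If direct guessing fails, I would reverse-engineer $H$ from the SDP solution itself: the dimensions of the zero eigenspaces of the optimal $Q_i$, together with the partial-labeling equivalence classes described just before Remark~\ref{zeroevec}, pin down the part sizes and the type pattern in the extremal blow-up, essentially reconstructing $H$ up to isomorphism. Everything downstream of choosing $H$---checking the two forbidden-subgraph conditions, and pairing the construction with the SDP output to produce an exact rational certificate---is routine in the framework laid out in Sections~2.2--2.4.
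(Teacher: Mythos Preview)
Your proposal is correct and follows essentially the same route as the paper: a \flagmatic\ flag algebra certificate for the upper bound, and a balanced blow-up of a six-vertex, ten-edge template for the lower bound. The paper does not need to reverse-engineer the template from the SDP output, since the right $H$ is already in the literature: it is the Frankl--F\"uredi 3-graph $H_6=([6],\{123,234,345,145,125,136,356,256,246,146\})$, characterised as the unique 3-graph on six vertices whose every link is a 5-cycle (hence triangle-free, giving $K_4^-$-freeness) and whose every 5-vertex subgraph is $C_5$ (giving $F_{3,2}$-freeness of the blow-up). One small slip: your parenthetical mentions an ``iterated blow-up of $K_4^-$'' attaining $2/7$; that construction is the iterated blow-up of $H_6$, not of $K_4^-$ (a blow-up of $K_4^-$ would itself contain $K_4^-$).
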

\begin{proof}
The upper bound is from a flag algebra calculation using \flagmatic\ (see Section 2.4 for how to obtain a certificate). The lower bound, due to Frankl and F\"uredi, is obtained by taking a balanced blowup of the following 5-regular 3-graph on $6$ vertices,
\[H_6=([6], \{123, 234, 345, 145, 125, 136, 356, 256, 246, 146\}). \]
There are two easy ways to visualise $H_6$. On the one hand, it is the unique 3-graph on 6 vertices such that for every $i \in [6]$ the link graph of $i$  is a $5$-cycle. Alternatively, we may think of it as the unique 3-graph on 6 vertices with all its 5-vertex subgraphs isomorphic to $C_5$. The first description makes it clear that blow-ups of $H_6$ are $K_4^-$-free, since the link graphs of $H_6$ contain no triangles. A blow-up of $C_5$ clearly has independent neighbourhoods, and a copy of $F_{3,2}$ involves vertices in at most $5$ different parts of a blow-up, so the second description establishes that blow-ups of $H_6$ are $C_5$-free as well.
\end{proof}

In both Theorems~\ref{k4-c5f32} and~\ref{k4-f32}, we believe that the lower bound construction given is the stable extremal configuration. 

\begin{theorem}\label{j4f32}
\[\pi(J_4, F_{3,2})=3/8.\]
\end{theorem}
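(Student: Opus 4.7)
My plan is to follow the template of Theorems \ref{k4-c5f32} and \ref{k4-f32}: exhibit a matching lower bound via a balanced blow-up of a small 3-graph, and obtain the upper bound by a flag algebra calculation using \flagmatic. For the lower bound, since the balanced blow-up of a 3-graph $H$ on $k$ vertices has asymptotic edge density $6e(H)/k^3$, I would look for $H$ with $e(H)/k^3 = 1/16$; the minimal solution is $k = 4$ and $e(H) = 4$, that is, $H = K_4$. So I propose the balanced blow-up of $K_4$: partition $[n]$ into four almost-equal parts $A_1, A_2, A_3, A_4$, and take as 3-edges all triples having exactly one vertex in each of three different parts. This yields $4(n/4)^3 + o(n^3) = (3/8)\binom{n}{3} + o(n^3)$ edges.

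To verify the construction is $(J_4, F_{3,2})$-free, I would observe that the link of any vertex $v \in A_i$ is the complete tripartite graph on the three parts $A_j$ with $j \neq i$. This 2-graph is $K_4$-free, since a $K_4$ in any tripartite graph would require four mutually adjacent vertices drawn from pairwise distinct parts out of only three available parts; hence the construction is $J_4$-free. For $F_{3,2}$-freeness, the joint neighbourhood of two vertices $u \in A_i, v \in A_j$ with $i \neq j$ lies in $A_k \cup A_l$ where $\{k,l\} = [4] \setminus \{i,j\}$, and this set induces no 3-edge, since every edge of the construction has its three vertices in three distinct parts; the joint neighbourhood of two vertices in the same part is empty. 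Note that the iterated blow-up of $K_4$ would not work here: iterating introduces 3-edges inside each $A_i$, so joint neighbourhoods of the form $A_k \cup A_l$ cease to be independent.

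For the upper bound, I would input $\mathcal{F} = \{J_4, F_{3,2}\}$ into \flagmatic\ and work with admissible subgraphs of order $m = 6$ (falling back to $m = 7$ if necessary). Since the conjectured extremal configuration is a finite blow-up, Remark \ref{zeroevec} supplies zero eigenvectors of the optimal positive semi-definite matrices, allowing \flagmatic\ to round the floating-point SDP output into rational matrices $Q_i$ that rigorously certify the bound $3/8$. The hard part, as with the preceding theorems, will be ensuring that the flag algebra bound is already tight at a tractable value of $m$ and that \flagmatic's rounding procedure succeeds without excessive loss in the denominator bound $q$; the fact that $3/8$ is a clean rational and that the extremal structure is a simple (non-iterated) blow-up gives good grounds for optimism. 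The resulting certificate would be stored as \verb|38.js|.
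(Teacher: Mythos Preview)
Your proposal is correct and follows essentially the same approach as the paper: lower bound from the balanced blow-up of $K_4$, upper bound from a \flagmatic\ certificate. The only noteworthy differences are stylistic: for $F_{3,2}$-freeness the paper uses a pigeonhole argument (every pair of vertices in $F_{3,2}$ lies in a common edge, so an embedding would need five distinct parts but only four are available), whereas you verify independent neighbourhoods directly; and your description of the link graph omits the isolated vertices in $A_i\setminus\{v\}$, which the paper includes but which do not affect $K_4$-freeness.
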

\begin{proof}The upper bound is from a flag algebra calculation using \flagmatic\ (see Section 2.4 for how to obtain a certificate). The lower bound is obtained by taking a balanced blow-up $H$ of $K_4$. For each vertex $x$ in the resulting 3-graph, the link graph is the disjoint union of an independent set of vertices and a complete 3-partite graph; such a graph clearly cannot contain a complete graph on 4 vertices, establishing that $H$ is $J_4$-free. To see that $H$ is $F_{3,2}$-free as well, it is enough to note that a copy of $F_{3,2}$ cannot involve two vertices lying in the same part of $H$, and that $H$ has only $4$ parts whereas  $F_{3,2}$ has $5$ vertices.
\end{proof}

\begin{theorem}\label{j5f32}
\[\pi(J_5, F_{3,2})=3/8.\]
\end{theorem}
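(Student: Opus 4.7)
The plan is to establish the matching upper and lower bounds separately. For the lower bound, I observe that $J_4$ is a sub-3-graph of $J_5$: deleting any single vertex from the size-$5$ independent set $A$ of $J_5$ (together with the three edges through it) yields an isomorphic copy of $J_4$. Consequently, every $J_4$-free 3-graph is automatically $J_5$-free, so the balanced blow-up of $K_4$ used in the proof of Theorem~\ref{j4f32} is simultaneously $J_5$-free and $F_{3,2}$-free, and achieves edge-density $3/8 - o(1)$. This immediately gives $\pi(J_5, F_{3,2}) \geq \pi(J_4, F_{3,2}) = 3/8$.

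For the upper bound $\pi(J_5, F_{3,2}) \leq 3/8$, I would invoke the flag algebra calculus via \flagmatic, running it with forbidden family $\{J_5, F_{3,2}\}$. Since $|V(J_5)|=6$, the natural first choice is admissible graph order $m=6$; if the resulting SDP bound is not tight, we would escalate to $m=7$. Having a matching lower bound construction in hand (the balanced blow-up of $K_4$), we can then apply Remark~\ref{zeroevec}: from the structure of the extremal configuration we read off the equivalence classes of partial labelings and construct rational zero eigenvectors for each optimal positive semi-definite matrix returned by the SDP solver. This enables \flagmatic\ to factor out the zero eigenspace of each $Q_i$ as $Q_i = R_i \, Q_i' \, R_i^T$ with $Q_i'$ positive definite, round the entries of $Q_i'$ to nearby rationals, and produce the rigorous certificate \verb|638.js| asserting the tight flag algebra inequality~(\ref{fab}) with value $3/8$.

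The main obstacle I would expect is computational rather than conceptual. Because $J_5$-freeness is a strictly weaker constraint than $J_4$-freeness, the family $\mathcal{H}$ of admissible graphs of order $m$ is larger, the intersection types are more numerous, and the SDP matrices are correspondingly bigger than those in the proof of Theorem~\ref{j4f32}. There is therefore a real risk that $m=6$ does not yield a tight bound and that $m=7$ is required, which pushes against the complexity barrier discussed in Section~4. However, since the conjectured extremal construction is identical to that of Theorem~\ref{j4f32}, the sharp flag algebra inequality for $J_4$ should serve as a close template for the $J_5$ case, and rounding guided by the same blow-up of $K_4$ should succeed once $m$ is chosen large enough.
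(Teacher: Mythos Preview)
Your proposal is correct and follows essentially the same approach as the paper: the lower bound comes from the balanced blow-up of $K_4$ together with the observation that $J_4 \subseteq J_5$ (so $J_4$-free implies $J_5$-free), and the upper bound is a \flagmatic\ flag algebra computation certified by \texttt{638.js}. Your additional discussion of the computational workflow (choice of $m$, zero-eigenvector extraction, rounding) simply elaborates on what the paper summarises in a single sentence.
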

\begin{proof}
The upper bound is from a flag algebra calculation using \flagmatic\ (see Section 2.4 for how to obtain a certificate). The lower bound is obtained, as in Theorem~\ref{j4f32}, by taking a balanced blow-up $H$ of $K_4$. Since $J_4$ is a subgraph of $J_5$, and $H$ is $J_4$-free, $H$ must be $J_5$-free as well. 
\end{proof}

We should make two remarks here. First of all, the flag algebra calculation involved in the proof of Theorem~\ref{j4f32} is `easy' in comparison with the calculations involved in the proofs of Theorems~\ref{k4-c5f32} and~\ref{k4-f32}. This, and the pleasing structure of our lower bound construction, suggest that the underlying Tur\'an density problem should be amenable to more direct combinatorial arguments. Secondly, we might have expected that the extremal configuration for the $(J_5, F_{3,2})$ problem be a balanced blow-up of $K_5$, yielding link graphs consisting of complete 4-partite graphs together with an independent set. However, $K_5$ is not $F_{3,2}$-free, and as Theorem~\ref{j5f32} shows, we do not gain anything from forbidding $J_5$ rather than $J_4$. It seems natural to ask whether this changes if one forbids $J_t$, for some $t>5$.

\begin{question}
Is it the case that for all $t \geq 4$,
\[\pi(J_t, F_{3,2})=3/8?\]
\end{question}

Let us also remark that all previous known results in extremal 3-graph theory had one of five extremal configurations: the blow-up of a 3-edge~\cite{bela, franklfuredi}, $H_6$~\cite{FranklFuredi}, the `one-way' complete bipartite 3-graph~\cite{FurediPikhurkoSimonovits} (an unbalanced blow-up of the degenerate 3-graph $([2],\{112\})$), Tur\'an's construction~\cite{Razborov2} (where the proof also relied on the flag algebra calculus) and the complete bipartite 3-graph~\cite{BaberThesis, deCaenFuredi, MubayiRodl, KeevashMubayi}. We can now add two more extremal configurations to this list: the balanced blow-up of $H_7$ and the balanced blow-up of $K_4$.

We now come to some Tur\'an problems for which we have been unable to find tight bounds using \flagmatic. Erd\H{o}s and S\'os conjectured that the maximal density of a 3-graph in which all vertices have a bipartite link graph is $1/4$:
\begin{conjecture}[Erd\H os, S\'os~\cite{ErdosSos}]\label{ErdosSos}
\[\pi(\Text{odd cycle in link graph})=1/4.\]
\end{conjecture}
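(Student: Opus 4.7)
The plan would be to establish $\pi(\Text{odd cycle in link graph}) = 1/4$ by matching an iterated-blow-up lower bound with a flag-algebraic upper bound.

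For the lower bound, I would take the iterated blow-up $B_n$ of a single 3-edge: at each level, partition the current vertex set into three (nearly) equal parts $A_1, A_2, A_3$, insert all 3-edges of type $A_1A_2A_3$, and recurse inside each $A_i$. An induction on the depth of the recursion shows that every link in $B_n$ is bipartite: for $v \in A_i$, the link decomposes as the complete bipartite graph between $A_j$ and $A_k$ (where $\{i,j,k\}=[3]$) together with the link of $v$ in the iterated sub-blow-up on $A_i$, which is bipartite by induction, and the two bipartitions can be merged consistently to yield a bipartition of the full link. Solving the recurrence $E(n) = (n/3)^3 + 3E(n/3)$ yields $E(n) = n^3/24 + O(n^2)$, hence edge density $1/4 + o(1)$.

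For the upper bound, the first step is to translate the condition: having a bipartite link at every vertex is equivalent to forbidding the infinite family $\{S^3 C_{2k+1} : k \geq 1\}$, where $S^3 C_{2k+1}$ denotes the 3-suspension of an odd cycle --- a 3-graph on $2k+2$ vertices consisting of a cycle $v_1 v_2 \dots v_{2k+1}$ together with an apex $x$ joined to each cycle edge by a 3-edge $xv_iv_{i+1}$. Note that $S^3 C_3 = K_4^-$, so this is strictly stronger than the problem $\pi(K_4^-)$, whose best known flag-algebraic bound lies above $1/4$; additional odd cycles must therefore be explicitly forbidden to have any chance of reaching $1/4$. The natural plan is to run \flagmatic\ at successive admissible orders $m = 6, 7, 8, \dots$, each time forbidding every $S^3 C_{2k+1}$ of order at most $m$, and hope the upper bound descends to $1/4$. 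Matching the construction at some finite $m$ would let us use $B_n$ to supply zero eigenvectors for the rounding scheme of Section~2.3, converting a floating-point bound of $1/4$ into a rigorous one.

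The main obstacle is twofold. First, iterated-blow-up extremal configurations sit squarely against the ``complexity barrier'' flagged in Section~4: the infinite nesting of scales appears structurally incompatible with any flag algebra bound at fixed finite order $m$, and there is no a priori reason to expect the plain flag algebra bound even to equal $1/4$ at any accessible $m$. Second, even an exact bound at some $m$ can only rule out finitely many of the forbidden cycles; long odd cycles in links would still require a separate stability argument, most naturally one saying that any near-extremal configuration is already approximately an iterated blow-up of a 3-edge, so that long odd cycles can be handled by induction on the levels of the approximate construction. Combining a near-tight flag algebra bound at some finite $m$ with such a stability-driven infinite-level induction is, I expect, the decisive and hardest step, and is presumably why the conjecture has resisted resolution.
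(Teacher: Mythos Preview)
The statement you are attempting to prove is an open \emph{conjecture} of Erd\H{o}s and S\'os; the paper does not prove it. What the paper actually establishes is Theorem~\ref{oddcyclesbounds}, namely $1/4 \leq \pi(\text{odd cycle in link graph}) < 0.258295$, with the upper bound coming from a flag algebra computation that does \emph{not} reach $1/4$. The paper's discussion in Section~4.1 explains why a purely flag-algebraic approach is not expected to close this gap, which you correctly anticipate in your final paragraph.

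Beyond the fact that no proof exists in the paper, your proposal contains a concrete gap: the stability step you outline --- arguing that any near-extremal configuration is close to an iterated blow-up of a single 3-edge --- cannot work as stated. The paper exhibits seven genuinely distinct extremal constructions (Constructions~\ref{c1}--\ref{c7}), including the Frankl--F\"uredi circle construction, the random-tournament construction, and iterated blow-ups of $C_5$, $H_7$, and two other 7-vertex 3-graphs, none of which are close (in edit distance) to an iterated blow-up of a 3-edge. Moreover, one can mix these constructions freely across levels of an iterated blow-up, so the problem is conjectured to be extremely unstable. Any approach that hinges on a single canonical extremal structure is therefore doomed, and this instability is precisely the reason the paper flags the conjecture as likely beyond the reach of the flag algebra method.
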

If the conjecture is true, then this is an extremely unstable problem. Two different constructions were given by Frankl and F\"uredi~\cite{FranklFuredi}:
\begin{construction}[Frankl, F\"uredi]\label{c1}
Distribute $n$ vertices uniformly along the circumference of a circle. Then define a 3-graph on $n$ vertices by putting a 3-edge $xyz$ in the graph if the centre of the circle lies in the interior of the triangle determined by $x$, $y$ and $z$, to obtain a $K_4^-$-free 3-graph.
\end{construction}
\begin{construction}[Frankl, F\"uredi]\label{c2}
Consider a random tournament $T$ on $n$ vertices. Then define a 3-graph on $n$ vertices by putting a 3-edge $xyz$ in the graph if $xyz$ is an oriented triangle in $T$.
\end{construction}
To these constructions, we can add five more
\begin{construction}\label{c3}
Take a balanced, iterated blow-up of the 3-graph consisting of a single 3-edge, $G=([3], \{123\})$.
\end{construction}
\begin{construction}\label{c4}
Take a balanced iterated blow-up of $C_5$.
\end{construction}
\begin{construction}\label{c5}
Take a balanced iterated blow-up of $H_7$.
\end{construction}
\begin{construction}\label{c6}
Take a balanced iterated blow-up of 
\[([7], \{123, 124,125,136,137,146,247,256,257,347,356,357,456,467\}).\]
\end{construction}
\begin{construction}\label{c7}
Take a balanced iterated blow-up of 
\[([7], \{123, 124,125,136,146,157,237,247,256,345,356,367,457,467\}).\]
\end{construction}
The last three constructions are all iterated blow-ups of some 6-regular 3-graph on 7 vertices. The best way to think about them is perhaps in terms of their link graphs: the link graphs in $H_7$ consist of 6-cycles, whereas the links in Constructions \ref{c6} and \ref{c7} are isomorphic to (respectively) a 4-cycle with two pendant edges attached to a pair of adjacent vertices, and a 4-cycle with a path of length 2 attached to one of the vertices. In all three cases, the links are bipartite, and so the links in an iterated blow-up are bipartite as well.

In fact, more generally, if $G$ is a 3-graph with bipartite links, then any iterated blow-up of $G$ also has bipartite links. We can thus construct arbitrarily many non-isomorphic configurations of 3-graphs with bipartite links and 3-edge density $1/4 +o(1)$ by taking any of the above constructions, blowing it up, and then inside each of the parts, we are free to place a copy of any of the other constructions.

Given this instability, the bipartite links conjecture of Erd\H{o}s and S\'os appears very hard. We believe, however, that the independent neighbourhoods version of the problem should be stable with Construction~\ref{c1} being the essentially unique extremal configuration.

\begin{conjecture}\label{indep odd cycle}
\[\pi(\Text{odd cycle in link graph}, F_{3,2})=1/4,\]
with the stable extremal configuration being given by Construction~\ref{c1}. 
\end{conjecture}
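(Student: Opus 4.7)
The plan is to combine a direct verification of Construction~\ref{c1} as the lower bound with a refined flag algebra attack on the upper bound, and then to leverage the resulting analysis to establish stability.

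For the lower bound, I verify that Construction~\ref{c1} is admissible with density $1/4+o(1)$. The density computation is classical: three independent uniform points on a circle enclose the centre with probability $1/4$. For the absence of odd cycles in links, fix $x$ at angle $0$; then $\{y,z\}$ is an edge of the link of $x$ iff $\{x,y,z\}$ encloses the centre, which one computes directly to be equivalent to $y$ and $z$ lying in opposite open semicircles $(0,\pi)$ and $(\pi,2\pi)$ with the arc between them through $\pi$ of length less than $\pi$. The link is therefore bipartite with classes given by the two semicircles. For $F_{3,2}$-freeness, note that for any $x,y$ the joint neighbourhood $\Gamma(x,y)$ is a single arc of length less than $\pi$, and three points in a common arc of length less than $\pi$ cannot enclose the centre, so $\Gamma(x,y)$ spans no 3-edge.

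For the upper bound, the natural first step is to run \flagmatic\ on the finite forbidden family $\mathcal{F}_\ell$ consisting of $F_{3,2}$, $K_4^-$, and every minimal 3-graph having a cycle of odd length at most $\ell$ in some link, for increasing $m$ and $\ell$. The authors note in the passage above that they were unable to obtain a tight bound this way; plausible culprits are (i) that the family of forbidden configurations corresponding to ``odd cycle in link'' is infinite, so truncating at any finite $\ell$ loses information, and (ii) that the extremal configuration is not a blow-up of a finite template, so the exact rounding step of Section~2.3 breaks down. Addressing (i) by pushing $\ell$ larger rapidly becomes infeasible, so one should instead try to encode the asymptotic odd-girth constraint through a small number of auxiliary linear inequalities on flag densities derived from the bipartite structure of the links, added by hand to the SDP.

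Addressing (ii), which is the principal obstacle, requires adapting Remark~\ref{zeroevec} to the geometric limit. The equivalence classes of partial labellings of Construction~\ref{c1} form a continuous family indexed by angles on the circle, so the zero eigenspace of the optimal $Q$ matrices is generated by integrals over such continuous families rather than by finitely many explicit vectors. The plan is to sample angles at the vertices of a sufficiently fine regular $N$-gon inscribed in the circle, producing an explicit finite-dimensional rational candidate zero-eigenspace, and then to show that for $N$ large enough this candidate spans the actual zero-eigenspace of the limiting SDP; the rational rounding can then proceed as in Section~2.3. Once a tight upper bound of $1/4$ is in place, a Pikhurko-style stability argument should promote it to the essential uniqueness of Construction~\ref{c1}: Lemma~\ref{tightcoeff} pins down which admissible subgraph densities must vanish in any near-extremal sequence, and matching these vanishing constraints against the statistics of Construction~\ref{c1} should force any extremal sequence to be within $o(n^3)$ edit distance of a copy of Construction~\ref{c1}.
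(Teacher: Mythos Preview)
The statement you are addressing is labelled a \emph{Conjecture} in the paper, not a theorem; the paper provides no proof. What the paper does establish is the lower bound (via the verification that Construction~\ref{c1} has bipartite links, is $F_{3,2}$-free, and has density $1/4+o(1)$) together with the non-tight upper bound $\pi(\text{odd cycle in link graph}, F_{3,2})<0.255889$ of Theorem~\ref{oddcyclesbounds}. Your lower-bound discussion is correct and matches the paper's own verification almost exactly, so on that half there is nothing to criticise.

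The remainder of your proposal is not a proof but a research programme, and it has genuine gaps. First, you have no evidence that the flag-algebra SDP, even with your proposed auxiliary inequalities encoding bipartiteness of links, actually attains the value $1/4$; the paper's Section~4.1 argues heuristically that problems whose extremal configurations are not blow-ups of a finite template (and Construction~\ref{c1} is not) are exactly the ones where the SDP bound typically fails to be tight. Second, your idea of sampling the continuous family of labellings on a regular $N$-gon to produce rational zero-eigenvectors presupposes that the limiting zero-eigenspace is finite-dimensional and that a finite sample spans it; neither claim is justified, and for a genuinely continuous extremal object there is no a priori reason to expect this. Third, even granting a tight upper bound, the stability step is only sketched: Lemma~\ref{tightcoeff} tells you which subgraph densities must vanish, but turning that into an $o(n^3)$-edit-distance statement to a \emph{specific} geometric construction (rather than to some blow-up) requires substantial additional work that you have not supplied. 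In short, the upper bound and stability remain open, and your outline does not close them.
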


\begin{figure}
\begin{center}
\usetikzlibrary{arrows}
\begin{tikzpicture}
\tikzstyle{vertex} = [circle, draw, fill=white, text centered, inner sep=1pt]
\foreach \a in {1, ..., 5} \draw (72*\a+90:1.8) node[vertex] (n\a) {$x_\a$};
\draw[-latex] (n2)--(n1);
\draw[-latex] (n1)--(n3);
\draw[-latex] (n3)--(n2);
\draw[-latex] (n1)--(n4);
\draw[-latex] (n4)--(n2);
\draw[-latex] (n1)--(n5);
\draw[-latex] (n5)--(n2);
\draw[-latex] (n3)--(n4);
\draw[-latex] (n4)--(n5);
\draw[-latex] (n5)--(n3);
\end{tikzpicture}
\end{center}
\caption{An orientation of $K_5^{(2)}$.} \label{k5orientation}
\end{figure}
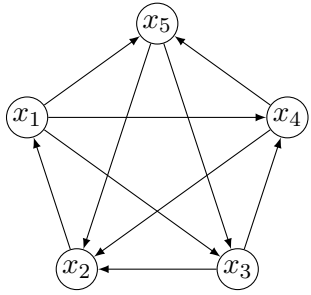

In fact, more generally, we believe that extremal problems for 3-graphs with independent neighbourhoods should be stable:

\begin{conjecture}
Tur\'an problems for 3-graphs with independent neighbourhoods are stable.
\end{conjecture}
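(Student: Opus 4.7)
The conjecture asserts a structural dichotomy for $F_{3,2}$-free extremal problems: joint-neighbourhood independence forces essentially unique extremal configurations. My plan is to attack this via a combination of the flag algebra framework developed in Section~2.2 and direct structural arguments on $F_{3,2}$-free 3-graphs.

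First, I would work in the setting of 3-graph limits (equivalent to the flag algebra setting): given a family $\mathcal{F}$ containing $F_{3,2}$ and a sequence $(G_n)$ of $\mathcal{F}$-free 3-graphs with $e(G_n)=(\pi(\mathcal{F})+o(1))\binom{n}{3}$, I would pass to a subsequence along which $d_H(G_n) \to \rho_H$ for every admissible $H$. By Lemma~\ref{tightcoeff} applied to any tight flag algebra bound for $\mathcal{F}$, the set $\{H : \rho_H > 0\}$ is restricted to those admissible graphs whose flag algebra coefficient attains $\pi(\mathcal{F})$; by Remark~\ref{zeroevec}, the vectors of $\sigma$-flag densities must be supported on (or converge to) the zero eigenspaces of the optimal matrices $Q_i$. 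The stability claim is then that the resulting profile $(\rho_H)$ is uniquely determined by $\mathcal{F}$, and that any large $G$ realising this profile approximately can be made into the unique candidate configuration at small edit cost.

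Second, I would exploit the strong structural constraints imposed by $F_{3,2}$-freeness. The key observation is that for all $x,y \in V(G)$, $\Gamma(x,y)$ is edge-free; iterating this, if $z \in \Gamma(x,y)$ then the triple codegrees are constrained in a way that forbids a wide class of local configurations. Combined with a symmetrisation argument in the style of Frankl--F\"uredi, these codegree constraints should force near-extremal configurations to admit a partition $V(G)=A_1 \sqcup \cdots \sqcup A_r$ with parts determined by a ``codegree-equivalence'' relation, and with 3-edges distributed between the parts according to a fixed template $H_{\mathcal{F}}$. Making this quantitative via an appropriate $F_{3,2}$-specific regularity or removal lemma, one expects every near-extremal $\mathcal{F}$-free 3-graph to be within edit distance $o(n^3)$ of a blow-up (possibly iterated) of $H_{\mathcal{F}}$.

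The main obstacle is showing that $H_{\mathcal{F}}$ is uniquely determined by $\mathcal{F}$. Even assuming the flag algebra bound for $\mathcal{F}$ is tight and Remark~\ref{zeroevec} provides a complete set of zero eigenvectors, pinning down the template combinatorially still appears to require case analysis depending on $\mathcal{F}$, as illustrated by the ad hoc nature of each proof in Section~3.1. Moreover, the conjecture implicitly presupposes that every $F_{3,2}$-free problem admits a tight flag algebra bound at some finite order $m$, which is itself an unproven assertion; the complexity barrier discussed in Section~4 suggests fundamental obstructions to such a uniform guarantee. A natural first milestone would therefore be to establish the conjecture under the additional hypothesis that the flag algebra method at some order $m$ yields the exact value of $\pi(\mathcal{F})$, reducing the global statement to a conditional ``flag algebra bound tight $\Rightarrow$ stable'' implication which already appears to be of significant interest.
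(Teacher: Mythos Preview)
The statement you are addressing is a \emph{conjecture} in the paper, not a theorem; the paper offers no proof whatsoever, only the sentence ``In fact, more generally, we believe that extremal problems for 3-graphs with independent neighbourhoods should be stable'' followed by the conjecture itself. There is therefore no paper proof to compare your proposal against.

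Your write-up is not a proof either, and to your credit you do not claim it is: it is a research plan with explicitly flagged gaps. The two most serious ones you already identify yourself. First, your argument presupposes that for \emph{every} family $\mathcal{F}\ni F_{3,2}$ the flag algebra method yields a tight bound at some finite $m$; nothing in the paper supports this, and Section~4 is largely devoted to reasons for doubting such uniform tightness (the complexity barrier, iterated blow-ups). Second, even granting a tight flag algebra bound, the passage from ``the subgraph-density profile $(\rho_H)$ is constrained by Lemma~\ref{tightcoeff} and Remark~\ref{zeroevec}'' to ``the profile is \emph{unique}'' is exactly the hard step, and your sketch of a Frankl--F\"uredi-style symmetrisation plus a codegree-equivalence partition is not an argument but a hope: $F_{3,2}$-freeness by itself does not force a bounded-size partition structure (witness Construction~\ref{c1}, which is $F_{3,2}$-free but not a blow-up of any finite template), so the claim that near-extremal configurations must be close to a blow-up of some fixed $H_{\mathcal{F}}$ needs substantially more than the codegree observation you cite.

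In short: the paper contains no proof, your proposal is a reasonable outline of how one might \emph{attempt} the conjecture, but it is not close to a proof and you correctly diagnose the principal obstructions yourself.
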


As we shall see in Section 3.3 however, the extremal theory of 3-graphs with independent neighbourhoods still has non-principality: there exist 3-graphs $H_1$ and $H_2$ such that 
\[\pi(H_1, H_2, F_{3,2})< \min\left(\pi(H_1, F_{3,2}), \ \pi(H_2, F_{3,2})\right).\]
Thus even in this restricted setting we cannot hope for an analogue of the Erd\H os-Stone Theorem from extremal graph theory.

Before we close this section, let us note the bounds we can obtain using \flagmatic\ for the problems in Conjectures~\ref{ErdosSos} and~\ref{indep odd cycle}:

\begin{theorem} \label{oddcyclesbounds}
\[1/4 \leq\pi(\Text{odd cycle in link graph}, F_{3,2}) < 0.255889, \text{ and}\]
\[1/4 \leq \pi(\Text{odd cycle in link graph}) < 0.258295.\]
\end{theorem}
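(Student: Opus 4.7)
The plan is to pair an explicit construction for the lower bounds with \flagmatic\ calculations for the upper bounds. A first conceptual point is that `odd cycle in link graph' is shorthand for an \emph{infinite} family of forbidden 3-graphs (one for each odd $k\geq 3$), but any upper bound obtained by forbidding only a finite sub-family remains a valid upper bound for the full problem. In practice I would forbid only the 3-graphs corresponding to a 3-cycle in some link (namely $K_4^-$) and a 5-cycle in some link (the 3-graph on 6 vertices consisting of a vertex $x$ and the five 3-edges $\{xv_iv_{i+1}\}_{i\in\mathbb{Z}/5\mathbb{Z}}$ for a labelled pentagon $v_1\dots v_5$).

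For the lower bound in \emph{both} inequalities, I would use Construction~\ref{c1}: place $n$ vertices uniformly on a circle centred at $O$ and include $xyz$ as a 3-edge iff $O$ lies in the interior of triangle $xyz$. The classical three-arc characterisation---$xyz$ is a 3-edge iff all three arcs determined by $\{x,y,z\}$ on the circle have angular length strictly less than $\pi$---yields three useful consequences: (i) the 3-edge density is $1/4+o(1)$; (ii) the link graph of any vertex $v$ is bipartite, with the bipartition given by the two open half-circles determined by $v$, and hence contains no odd cycle; (iii) the joint neighbourhood of any pair $\{x,y\}$ is exactly the arc diametrically opposite the short $xy$-arc, which has angular length $<\pi$, so no three of its vertices form a 3-edge and independent neighbourhoods ($F_{3,2}$-freeness) hold. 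A single construction therefore handles both lower bounds.

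For the upper bounds, I would invoke \flagmatic\ with the two aforementioned forbidden families at admissible-graph order $m=7$ (as suggested by the certificate names \verb|k4-f32l5.js| and \verb|k4-l5.js|). The standard pipeline of Sections~2.2--2.3 then applies: enumerate $\mathcal{H}$, the intersection types and their flags; compute all required $d(H)$ and $d_{F,F'}(H)$; solve the resulting SDP with CSDP; decompose the floating-point output matrices via Cholesky and round to rational approximations of the form $Q_i = R_i\,Q'_i\,R_i^T$ with $Q'_i$ positive definite diagonal; and substitute into~(\ref{fab}) to certify the rigorous bounds $0.255889$ and $0.258295$ respectively.

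The main obstacle is that these upper bounds do not close the gap to the conjectured true value $1/4$. This is essentially forced: the problem is highly unstable---Constructions~\ref{c1}--\ref{c7} and indeed infinitely many of their nested combinations all achieve 3-edge density $1/4$ with bipartite links---so Remark~\ref{zeroevec} offers no single limit extremal object from which to reconstruct an exact rational solution, and we must settle for a naive Cholesky-and-round bound rather than the tight reconstruction used in earlier theorems. Closing the gap, if at all possible within the flag algebra framework, would require fundamentally new ideas, related to the `complexity barrier' phenomenon to be discussed in Section~4.
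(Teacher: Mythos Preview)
Your proposal is correct and follows essentially the same approach as the paper: the lower bounds come from Construction~\ref{c1} (with the same verifications of bipartite links and $F_{3,2}$-freeness that the paper carries out elsewhere), and the upper bounds come from the two \flagmatic\ certificates \texttt{k4-f32l5.js} and \texttt{k4-l5.js}. Your explicit observation that forbidding only the finite sub-family $\{K_4^-,\text{5-cycle-in-link}\}$ already yields a valid upper bound for the infinite `odd cycle in link' problem is a point the paper leaves implicit in the certificate names, but otherwise the arguments coincide.
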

\begin{proof}
The upper bounds are from two flag algebra calculations using \flagmatic\ (see Section 2.4 for how to obtain a certificate). Lower bounds from Construction~\ref{c1}.
\end{proof}

Let us finally outline a proof of our claim that Constructions~\ref{c1}--\ref{c7} are distinct. (That they have asymptotic density $1/4$ and bipartite links is left as an exercise for the reader.)

Constructions~\ref{c3}--\ref{c7} can be distinguished by considering their link-graphs; they are moreover highly structured, so that with high probability, the random Construction~\ref{c2} cannot be edited into them without changing at least a constant proportion of the 3-edges. (Indeed the probability of say $n/3$ vertices having identical neighbourhoods (up to $o(n^3)$ edges) in the rest of the 3-graph is exceeding small.)

Clearly iterated blow-up constructions are not $F_{3,2}$-free. It is easy to see that Construction~\ref{c2} is not $F_{3,2}$-free either: given 5 vertices $x_1,x_2,x_3,x_4,x_5$, the orientation 
\[\overrightarrow{x_2x_1},\overrightarrow{x_1x_3}, \overrightarrow{x_3x_2},\overrightarrow{x_1x_4}, \overrightarrow{x_4x_2}, \overrightarrow{x_1x_5},\overrightarrow{x_5x_2},\overrightarrow{x_3x_4}, \overrightarrow{x_4x_5}, \overrightarrow{x_5x_3}\]
(see Figure \ref{k5orientation}) occurs with probability at least $2^{-10}$ in a random tournament, so that we expect $F_{3,2}$ to occur as a subgraph in Construction~\ref{c2} with strictly positive density.

Now on the other hand, Construction~\ref{c1} \emph{is} $F_{3,2}$-free. Indeed, consider any two vertices $x_1$ and $x_2$ on the circumference of a circle, and let us show that their common neighbourhood is an independent set. If $x_1$ and $x_2$ lie on the same diameter, their codegree must be zero, as $x_1$ and $x_2$ cannot be vertices of a triangle that has the centre of the circle in its interior. Thus we may assume, without loss of generality, that $x_1$ and $x_2$ do do not lie on the same diameter. Then the diameters through $x_1$ and $x_2$ separate the circumference of the circle into four arcs (see Figure \ref{circleconst}). By construction, the common neighbourhood of $x_1$ and $x_2$ consists of all the vertices lying on the interior of the arc that contains neither $x_1$ nor $x_2$. But by construction this is an independent set of vertices. Thus Construction~\ref{c1} is distinct from all our other constructions.

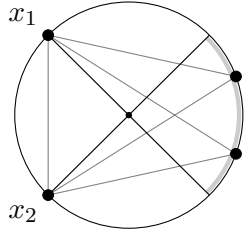
\begin{figure}
\begin{center}
\begin{tikzpicture}
\draw[ultra thick, gray!40] (45:14.5mm) arc (45:-45:14.5mm);
\draw[gray] (135:15mm)--(20:15mm)--(225:15mm)--(135:15mm)--(-20:15mm)--(225:15mm);
\draw (0,0) circle (15mm);
\draw[fill=black] (0,0) circle (1pt);
\draw[fill=black] (0,0) -- (135:15mm) circle (2pt) node[above left] {$x_1$}
(0,0)--(-45:15mm) (0,0)--(45:15mm) (0,0)--(225:15mm)
circle (2pt) node[below left] {$x_2$} (20:15mm) circle (2pt) (-20:15mm) circle (2pt);
\end{tikzpicture}
\end{center}
\caption{The circle construction has independent neighbourhoods.} \label{circleconst}
\end{figure}

\subsection{Forbidding induced subgraphs}
Tur\'an's conjecture is arguably the most famous open problem in extremal combinatorics.

\begin{conjecture}[Tur\'an]\label{turanconj}
\[\pi(K_4)=5/9.\]
\end{conjecture}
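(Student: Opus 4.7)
The lower bound $\pi(K_4) \geq 5/9$ comes from the classical Tur\'an construction: partition $[n]$ into three balanced parts $A_1 \sqcup A_2 \sqcup A_3$ and take as 3-edges every triple of type $A_1 A_2 A_3$ together with every triple of type $A_i A_i A_{i+1}$ for $i \in \{1,2,3\}$ (indices modulo~3). A routine case analysis on how four vertices distribute among the three parts shows that this 3-graph is $K_4$-free, and a direct count gives edge density $5/9 + o(1)$.

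For the upper bound, the natural first plan is to run the flag algebra calculus using \flagmatic\ with admissible subgraphs of as large an order $m$ as is computationally feasible, and with all intersection types of matching parity. This is essentially what Razborov does in~\cite{Razborov2}; with $m=7$ he obtains the current best unconditional bound of roughly $\pi(K_4) \leq 0.562$, tantalisingly close to but strictly above $5/9$. Pushing $m$ further would presumably shave the bound down a little, but there is strong reason to believe no finite value of $m$ suffices to close the gap.

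The reason---and the principal obstacle---is the catastrophic failure of stability for this problem. Brown, Kostochka, Fon-Der-Flaass and others exhibited a continuum of pairwise non-isomorphic $K_4$-free 3-graphs on $n$ vertices of asymptotic density $5/9$, obtained by certain ``switching'' perturbations of the Tur\'an construction. Consequently the extremal configurations do not accumulate around any single blow-up template, and the key mechanism that makes a flag algebra bound tight in a stable problem---Remark~\ref{zeroevec}, namely that flag densities near an extremal configuration vanish against the zero eigenspaces of our optimal matrices $Q_i$---cannot simultaneously be arranged to hold for all of these extremal configurations by any finite-dimensional positive semi-definite certificate. This is the ``complexity barrier'' of Section~4 in its most dramatic form.

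Therefore my plan cannot rely on flag algebras alone. The most promising route is to combine the flag algebra calculus with an auxiliary structural input that prunes the extremal family down to something manageable, in the spirit of Razborov's conditional theorem $\pi(K_4,\Text{ 4-set spanning 1 edge}) = 5/9$: there the extra forbidden configuration forces uniqueness of the extremal blow-up and renders the problem stable, after which the semi-definite certificate closes. The hard part---the reason the conjecture has resisted for sixty years---is to find such a structural input that is automatically satisfied (after some preprocessing) in \emph{every} sufficiently dense $K_4$-free 3-graph, rather than being an extra hypothesis. Pruning the Brown--Kostochka--Fon-Der-Flaass zoo down to a finite list of blow-up templates appears to require essentially proving the conjecture first, and I have no proposal that overcomes this.
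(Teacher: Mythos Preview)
The statement you were asked to prove is stated in the paper as a \emph{conjecture}, not a theorem; the paper contains no proof of it, and indeed explicitly records the problem as open (with the best known upper bound $\pi(K_4) < 0.561666$ listed in the summary table). Your write-up correctly identifies this: you give the lower-bound construction, cite Razborov's unconditional upper bound, and explain the instability obstruction to a tight flag-algebra certificate. This discussion is accurate and in fact mirrors the paper's own commentary in Section~4.1 on why unstable problems such as this one lie beyond the reach of the method.

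So there is no discrepancy to flag here, other than a framing one: what you have written is not a proof proposal but a (sound) explanation of why no proof is currently available. If the intent was to attempt a proof, then the honest conclusion---which you do reach---is that you have none, and neither does the paper.
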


Tur\'an's original construction for the lower bound that motivates his conjecture is obtained by taking a balanced tripartition $A\sqcup B \sqcup C$ of the vertex set, and putting in all 3-edges of type $AAB, BBC,CCA$ and $ABC$. (In our language, this is a blow-up of the degenerate 3-graph $([3], \{123, 112, 223, 331\})$.) Many other other constructions for the problem have since been found. Indeed there are exponentially many nonisomorphic 3-graph configurations on $n$ vertices attaining the bound given by Tur\'an's construction while not containing any copy of $K_4$: see Brown~\cite{Brown}, Kostochka~\cite{Kostochka}, Fon-der-Flaas~\cite{FonderFlaass} and Frohmader~\cite{Frohmader}. 
If Tur\'an's conjecture is true, the Tur\'an density problem for $K_4$ is therefore very unstable and thus (for reasons we shall develop in Section~4) unlikely to be resolved by a pure flag algebra calculus-based approach.

Razborov observed, however, that Tur\'an's construction is the only one in which no 4-set of vertices spans exactly one 3-edge. Adding this restriction, he was able to give a proof of a weakening of Tur\'an's conjecture using the flag algebra calculus. Formally, let us call $G_1$ the unique (up to isomorphism) 3-graph on 4 vertices with exactly one 3-edge. Then the following holds:

\begin{theorem}[Razborov~\cite{Razborov2}]
\[\pi(K_4, \Text{ induced }G_1)=5/9.\]
\label{razb41}
\end{theorem}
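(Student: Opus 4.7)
The plan is to establish the matching lower and upper bounds separately, using Tur\'an's construction for the former and the flag algebra calculus (as developed in Section 2.2) for the latter.

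For the lower bound, I would take the balanced tripartite construction described just before the theorem statement: partition $[n]$ into $A \sqcup B \sqcup C$ of sizes as equal as possible, and include all 3-edges of type $AAB$, $BBC$, $CCA$ and $ABC$. A quick count gives edge density $5/9 + o(1)$. The 3-graph is manifestly $K_4$-free: any four vertices must contain two from some part, say $A$, and one checks that the other two vertices $u,v$ cannot both satisfy that the triples with the two $A$-vertices are edges (the constraint $AAB$ forces $u,v \in B$, but then the triple $\{u,v,a\}$ of type $ABB$ is not an edge). To check that no 4-set spans exactly one 3-edge, I would run through the $\binom{4+2}{2}$ possible distributions of the 4 vertices among parts: distributions with all vertices in one part give $0$ edges, distribution $(3,1,0)$ gives $0$ or $3$ edges depending on which pair of parts is used, $(2,2,0)$ gives $2$ edges, $(2,1,1)$ gives $3$ edges, and $(1,1,1,0)$ with an extra vertex gives $\geq 2$ edges. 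Hence the induced $G_1$ never appears.

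For the upper bound I would invoke the flag algebra framework of Section 2.2, following the template used in Theorems \ref{k4-c5f32}--\ref{j5f32}. Concretely, fix an integer $m$ (one would try $m=6$ first, as this is the first $m$ where enough structure is seen; $m=7$ might be needed), and let $\mathcal{H}$ be the family of all 3-graphs on $m$ vertices that are both $K_4$-free and contain no induced copy of $G_1$. I would then enumerate all intersection types $\sigma$ with $|V(\sigma)| \equiv m \pmod 2$ and $|V(\sigma)| \le m-2$, together with the associated $\sigma$-flag families $\mathcal{F}_{\sigma}^{(m+|\sigma|)/2}$, and compute the densities $d(H)$ and flag pair densities $d_{F,F'}(H)$. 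Feeding these into an SDP solver yields floating-point candidates $Q_1, \dots, Q_r$ for the positive semi-definite matrices minimising $\max_H (d(H) + a_H)$; the plan is that the optimum should coincide with $5/9$ up to numerical error.

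The key extra step, and likely the main obstacle, is passing from a floating-point SDP solution to an exact rational certificate. Here I would lean on the stability of the problem: the theorem is specifically formulated so that Tur\'an's construction becomes the unique extremal configuration, and this uniqueness is precisely what makes Remark \ref{zeroevec} useful. I would use Tur\'an's construction to generate, for each type $\sigma$, the equivalence classes of partial labellings and thereby write down explicit vectors that must lie in the kernel of the corresponding $Q_i$. Factoring out those kernels gives $Q_i = R_i Q_i' R_i^T$ with $Q_i'$ positive definite, and I would then round the entries of each $Q_i'$ to nearby rationals (denominators up to $10^8$ as in Flagmatic's default), confirm the rounded matrices remain positive definite via Gaussian elimination to diagonal form, and finally substitute everything into (\ref{fab}) to verify that the rational bound is exactly $5/9$. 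The hard part is the rounding: with exponentially many non-extremal configurations lurking near the Tur\'an construction (as emphasised just before the theorem), it is conceivable that the SDP has several nearly-equal optima and that naive rounding destroys positive semi-definiteness; one may need to increase $m$, add extra zero-eigenvector constraints from symmetry, or use more sophisticated rounding to succeed.
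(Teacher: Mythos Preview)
This theorem is not proved in the present paper; it is quoted as a result of Razborov (citation~\cite{Razborov2}) with no proof supplied. So there is no ``paper's own proof'' to compare your proposal against.

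That said, your outline matches the method Razborov used and the general template the paper employs for its own flag-algebra results: Tur\'an's tripartite construction for the lower bound, and a flag algebra SDP computation for the upper bound, followed by exact rounding guided by the zero-eigenvector information coming from the extremal construction. Your lower-bound case analysis is correct, though the stray mention of ``$(1,1,1,0)$ with an extra vertex'' is a slip---there are only three parts, so the distributions $(4,0,0)$, $(3,1,0)$, $(2,2,0)$, $(2,1,1)$ already exhaust the possibilities, and none yields exactly one edge. One minor confusion in your closing paragraph: the worry about ``exponentially many non-extremal configurations lurking near the Tur\'an construction'' applies to the \emph{unrestricted} $K_4$ problem, not to this one. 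Forbidding induced $G_1$ is precisely what eliminates all the rival constructions and renders the problem stable; that stability is what allows the rounding step to succeed, and it is the reason Razborov introduced the extra restriction in the first place.
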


Thus in this case Razborov was able to circumvent the instability of the $K_4$ problem to obtain his result.
%a weaker (but still extremely impressive!) result. 
Further work on the Tur\'an conjecture along these lines can be found in~\cite{Pikhurko, Razborov3}. Proceeding similarly to Razborov, we considered the following conjecture, which is also attributed to Tur\'an:

\begin{conjecture}[Tur\'an]\label{k5conj}
\[\pi(K_5)=3/4.\]
\end{conjecture}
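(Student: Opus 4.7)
The plan is to adapt Razborov's strategy for Tur\'an's $K_4$ conjecture (Theorem~\ref{razb41}), which this paper has already extended to $K_5$ via the result $\pi(K_5,\text{5-set spanning 8 edges})=3/4$, and then attempt to remove the auxiliary constraint. The lower bound $\pi(K_5)\geq 3/4$ is immediate from the balanced bipartite construction: partition $V = A \sqcup B$ with $\size{A}=\size{B}=\lfloor n/2 \rfloor$ and include every 3-edge except the two monochromatic types $AAA$ and $BBB$. By pigeonhole any 5 vertices contain 3 in a common part and hence a non-edge, so $K_5$ is avoided; the edge density tends to $1 - 2(1/2)^3 = 3/4$.

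For the upper bound, the first step is to run \flagmatic\ on the forbidden family $\{K_5\}$ with admissible graphs of order $m=6$, and of order $m=7$ if computation permits. I fully expect a direct flag algebra attack to fall strictly short of $3/4$: in analogy with the $K_4$ problem, the $K_5$ problem is highly unstable, admitting many inequivalent near-extremal configurations (blow-ups of various degenerate templates, random-like "majority" constructions, iterated blow-ups and the zoo of structures referenced above via Brown, Kostochka, Fon-der-Flaass, and Frohmader). No single positive semidefinite certificate can simultaneously be tight against all of them. The standard workaround, already pursued in this paper, is to forbid an additional small (possibly induced) 3-graph $H$ that is absent from the balanced bipartite construction but appears in every competing near-extremal candidate, and then establish $\pi(K_5,H)=3/4$ by \flagmatic. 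Before attempting the full conjecture I would first survey several natural choices of $H$ (induced 4-sets and 5-sets whose edge-patterns are forbidden by a bipartite structure) to maximise the set of rival constructions being suppressed.

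The genuinely new content needed for the full conjecture is the removal of the side-constraint. This would require a stability theorem of the form: every $K_5$-free 3-graph on $n$ vertices with $(3/4 - o(1))\binom{n}{3}$ 3-edges is within $o(n^3)$ edge-edits of some $H$-free configuration. The strategy would be (i) apply Lemma~\ref{tightcoeff} and Remark~\ref{zeroevec} to the optimal certificate for $\pi(K_5,H)=3/4$ to extract strong structural constraints on the induced subgraph densities of a near-extremal graph $G$, and (ii) combine these with a hypergraph regularity/removal argument to locate the bipartition explicitly, showing along the way that copies of $H$ cannot be avoided without losing $\Omega(n^3)$ edges.

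The main obstacle is step (ii): the exponentially many genuinely distinct near-extremal $K_5$-free configurations mean that a single $H$ cannot be absent from all of them, so the required stability statement is almost certainly false as phrased and would need to be replaced by a stability statement relative to a whole \emph{family} of restricted problems, with a matching argument that a near-extremal $G$ must be close to at least one member of a controlled family of $H_i$-free graphs. The authors' own discussion in Section~4 of the complexity barrier strongly suggests that this obstruction is intrinsic to the flag algebra approach. I therefore expect the plan, realistically, to deliver only a sequence of progressively stronger restricted theorems $\pi(K_5,\mathcal{H})=3/4$ rather than the conjecture itself, which appears to lie beyond the reach of flag algebras in their current form.
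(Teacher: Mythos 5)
This statement is an open \emph{conjecture}, not a theorem of the paper: the authors do not prove it, and indeed explicitly report only the bounds $3/4 \le \pi(K_5) < 0.769533$ (Proposition~\ref{itblowupbounds} and the summary table), along with the restricted result $\pi(K_5,\text{ 5-set spanning 8 edges})=3/4$ (Theorem~\ref{weak sos conj}). Your proposal is therefore not in competition with any proof in the paper, and your own conclusion is the correct one: what you describe is a programme, not a proof, and you rightly say it would at best yield further restricted theorems.

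Your lower bound argument is correct and is the same complete bipartite construction the paper uses. Your diagnosis of why the upper bound is out of reach---instability, exponentially many non-isomorphic near-extremal configurations (Sidorenko, and the Brown/Kostochka/Fon-der-Flaass/Frohmader analogues for $K_4$), and hence no single PSD certificate that is simultaneously tight against all of them---matches the paper's own Section~4 discussion of the complexity barrier almost exactly, including the observation that a tight flag algebra bound would have to have tight coefficients on every subgraph occurring with positive density in \emph{some} extremal configuration. One point worth sharpening: your step (ii) proposes to ``locate the bipartition explicitly'' via regularity/removal, but since there are non-bipartite extremal configurations (e.g.\ the Keevash--Mubayi blow-up construction), there is no canonical bipartition to locate; any stability statement would have to be relative to the full family of extremal constructions, which is not even classified. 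You already flag this yourself, so there is no error here---only a confirmation that the conjecture genuinely remains open.
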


As in Conjecture~\ref{turanconj}, more than one extremal configuration attaining the conjectured bound is known. One $K_5$-free 3-graph with density $3/4+o(1)$ is obtained by taking a complete balanced bipartite 3-graph. Another example, due to Keevash and Mubayi~\cite{Keevash}, is obtained by taking a balanced blow-up of $K_4$ and, writing $A \sqcup B \sqcup C \sqcup D$ for the corresponding 4-partition of the vertex sets, adding all 3-edges of type AAB, BBC, CCD and DDA as well as all 3-edges of type AAC, CCA, BBD and DDB. In our notation, this is a blow-up of the degenerate 3-graph \[([4], \{123,124, 134, 234, 112, 223, 334, 441, 113, 331, 224, 442\}.\]

This is easily seen to be distinct from the first example. Many more configurations exist: Sidorenko exhibited infinite families of nonisomorphic $K_5$-free constructions with asymptotic density $3/4$ (see Constructions 4--7 in~\cite{Sidorenko}). Thus if Conjecture~\ref{k5conj} is true, then the Tur\'an density problem for $K_5$ is very unstable and, just as in Conjecture~\ref{turanconj}, we are unlikely to arrive at tight bounds for $\pi(K_5)$ by a pure flag algebra calculus-based approach.

We are, however, able to obtain an analogue of Razborov's result: observe that in a complete bipartite graph, a 5-set of vertices cannot span exactly 8 edges. On the other hand, consider for example the construction of Keevash and Mubayi: taking two vertices from part A, one from part B and two from part C yields a 5-set spanning exactly 8 edges. Let us therefore write $\mathcal{G}$ for the family of 3-graphs on 5 vertices with exactly 8 edges. (There are only two such 3-graphs up to isomorphism; considering $K_5$ as a graph on the vertex set $[5]$, these are $K_5\setminus\{123,145\}$ and $K_5 \setminus \{123,124\}$ respectively.) Then the following holds:

\begin{theorem}\label{weak sos conj}
\[\pi(K_5, \Text{ induced copy of a member of $\mathcal{G}$})=3/4.\]
\end{theorem}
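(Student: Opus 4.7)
The lower bound is witnessed by the complete balanced bipartite $3$-graph $B_n$ on vertex set $A\sqcup B$ with $\lfloor n/2\rfloor$ and $\lceil n/2\rceil$ vertices in each part, whose edges are all triples meeting both parts. This graph has edge density $3/4+o(1)$, so it remains to check admissibility. Since any $5$-set of vertices has at least three vertices on one side of the bipartition, and these three vertices span no edge, $B_n$ contains no $K_5$ (and in fact no $K_4$). Moreover, a $5$-set with $a$ vertices in $A$ and $5-a$ in $B$ spans exactly $\binom{5}{3}-\binom{a}{3}-\binom{5-a}{3}$ edges, which evaluates to $0,6$ or $9$ depending on whether $(a,5-a)$ is $(0,5),(1,4)$ or $(2,3)$ (up to swapping). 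Thus no induced $5$-subgraph of $B_n$ has exactly $8$ edges, so $B_n$ is free of induced copies of members of $\mathcal{G}$.

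For the upper bound, the plan is to apply the flag algebra calculus using \flagmatic\ with $\mathcal{F}=\{K_5\}\cup\{\text{induced copies of graphs in }\mathcal{G}\}$. I would feed this family into \flagmatic, working with admissible graphs of order $m=6$ (larger $m$ being likely intractable given the rapid growth of the number of admissible $3$-graphs mentioned in Section~2.4). \flagmatic\ will enumerate the admissible $3$-graphs of order $6$ up to isomorphism, generate all intersection types of order congruent to $m$ modulo $2$ and the associated $\sigma$-flags of order $(m-|V(\sigma)|)/2$, and compute all densities $d(H)$ and flag pair densities $d_{F,F'}(H)$. The SDP solver CSDP is then invoked to produce a floating-point bound close to $3/4$.

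Since the bound is conjectured to be tight with $B_n$ as the extremal configuration, the plan is then to convert the floating-point solution to an exact rational one as described in Section~2.3. Using Remark~\ref{zeroevec}, the limits of flag densities at labellings of $B_n$ furnish rational zero eigenvectors of each optimal matrix $Q$; these assemble into the matrix $R$, and factoring out the zero eigenspace writes $Q=R\,Q'\,R^T$ with $Q'$ positive definite. \flagmatic\ then rounds the entries of $Q'$ to nearby rationals, diagonalises the result by Gaussian elimination to certify positive definiteness, and verifies that substitution into the flag algebra bound~(\ref{fab}) gives exactly $3/4$. The resulting certificate is recorded in the file \verb|k58i.js|.

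The main obstacle is the rounding step. Because the $(K_5,\text{induced }\mathcal{G})$-free extremal theory plausibly admits more than one stable configuration (the Keevash--Mubayi and Sidorenko $K_5$-free constructions become inadmissible once induced members of $\mathcal{G}$ are forbidden, but one must check that none of them, or variants, survive with density $3/4+o(1)$), one may find zero eigenvectors coming from sources other than $B_n$ that need to be identified by inspection in order for the rounding to succeed. If $B_n$ turns out to be the unique stable extremal configuration, the zero eigenvectors obtained from it via Remark~\ref{zeroevec} should suffice; otherwise one must enlarge the family of extremal configurations used to build $R$. Once this combinatorial input is correct, the remaining verification is routine and can be checked independently using the \verb|inspect_certificate.py| tool.
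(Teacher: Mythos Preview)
Your proposal is correct and follows essentially the same approach as the paper: the lower bound via the complete balanced bipartite $3$-graph (with your explicit check that $5$-sets span $0$, $6$, or $9$ edges, hence never $8$), and the upper bound via a \flagmatic\ flag algebra calculation producing the certificate \texttt{k58i.js}. Your write-up is more detailed than the paper's two-sentence proof, and your caveats about the rounding step and potential extra zero eigenvectors are sensible but, as the existence of the certificate shows, did not in fact cause trouble; the only uncertain detail is your guess of $m=6$, which the paper does not specify explicitly.
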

\begin{proof}
The upper bound is from a flag algebra calculation using \flagmatic\ (see Section 2.4 for how to obtain a certificate). The lower bound is from consideration of a complete balanced bipartite 3-graph.
\end{proof}

Just as for Theorem \ref{razb41}, it would be nice to have a more direct, combinatorial proof of Theorem~\ref{weak sos conj}; the proof above does not give much insight into the problem.

The strategy of introducing extra restrictions that we know must be satisfied by our desired extremal configuration in order to obtain a better bound is not new. An earlier result of a similar flavour (but proved without resorting to the flag algebra calculus) is the following Theorem of Frankl and F\"uredi~\cite{FranklFuredi}.
\begin{theorem}[Frankl, F\"uredi~\cite{FranklFuredi}]\label{ff}
\[\pi(K_4^-, \Text{ induced }G_1)= 5/18.\]
\end{theorem}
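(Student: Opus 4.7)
The plan is to derive Theorem~\ref{ff} from Theorem~\ref{k4-f32} by observing that within the class of $K_4^-$-free $3$-graphs, forbidding an induced copy of $G_1$ is already strong enough to forbid $F_{3,2}$. Combined with an appropriate lower-bound construction (conveniently, the same one used for Theorem~\ref{k4-f32}), this will pin down the Tur\'an density exactly.

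For the upper bound, I would first establish the following structural implication: if $G$ is $K_4^-$-free and contains $F_{3,2}$ as a (not necessarily induced) subgraph, then $G$ contains an induced copy of $G_1$. Suppose such a copy of $F_{3,2}$ is witnessed in $G$ by vertices $x,y,z_1,z_2,z_3$ with $3$-edges $xyz_1, xyz_2, xyz_3, z_1z_2z_3$. For each pair $1\leq i<j\leq 3$, the triple $yz_iz_j$ cannot be a $3$-edge of $G$: if it were, then $\{x,y,z_i,z_j\}$ would span the three $3$-edges $xyz_i, xyz_j, yz_iz_j$, yielding a copy of $K_4^-$, a contradiction. Hence the only $3$-edge inside $\{y,z_1,z_2,z_3\}$ is $z_1z_2z_3$, so this $4$-set induces a copy of $G_1$. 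Taking the contrapositive, every $(K_4^-,\text{induced }G_1)$-free $3$-graph is $(K_4^-, F_{3,2})$-free, and Theorem~\ref{k4-f32} gives
\[ \pi(K_4^-, \text{induced } G_1) \leq \pi(K_4^-, F_{3,2}) = 5/18. \]

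For the matching lower bound, I would reuse the balanced blow-up of $H_6$ from Theorem~\ref{k4-f32}, which has density $5/18+o(1)$ and is $K_4^-$-free. It remains to check that no $4$-set of the blow-up induces $G_1$. The key observation is that inside $H_6$ itself, every $4$-subset of $[6]$ spans exactly two $3$-edges: each spans at most two by $K_4^-$-freeness, while the double count $\sum_{|S|=4}|E(H_6[S])| = |E(H_6)|\cdot(6-3) = 30 = 2\binom{6}{4}$ forces equality throughout. A short case analysis according to how the four chosen vertices distribute across the parts of the blow-up (four distinct parts; three distinct parts with one doubled; two parts; one part) then shows, using that no $3$-edges of type $A_iA_iA_j$ or $A_iA_iA_i$ arise in a blow-up, that any $4$-set spans either $0$ or $2$ edges, and never exactly one. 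This rules out induced copies of $G_1$.

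The main step is the structural implication in the upper bound, but this is essentially a one-line pigeonhole-style argument rather than a genuine obstacle; the heavy lifting is inherited from the flag-algebra proof of Theorem~\ref{k4-f32}. An alternative, more self-contained route would be to run an independent \flagmatic\ calculation for the pair $(K_4^-, \text{induced } G_1)$, but this would obscure the simple combinatorial reason why the two problems share the same Tur\'an density.
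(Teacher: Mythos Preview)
Your proposal is correct. Note, however, that the paper does not itself prove Theorem~\ref{ff}: it is quoted as a result of Frankl and F\"uredi, whose original argument is a direct combinatorial classification (they show that every $(K_4^-,\text{induced }G_1)$-free $3$-graph is either a blow-up of $H_6$ or arises from Construction~\ref{c1}) rather than a flag-algebra computation.

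That said, your derivation is exactly the route the paper takes to argue that Theorem~\ref{k4-f32} is ``nominally stronger'' than Theorem~\ref{ff}. The unnamed Lemma immediately following Theorem~\ref{ff} proves precisely your structural implication (with slightly different labels: from the edges $abc,ade,bde,cde$ of $F_{3,2}$ one forces $\{a,b,c,d\}$ to span exactly one edge), and the shared lower-bound construction is the balanced blow-up of $H_6$. Your double-counting verification that every $4$-set in $H_6$ spans exactly two edges, and the ensuing case split for the blow-up, are a nice addition that the paper leaves implicit. So your proof and the paper's implicit alternative proof coincide; both differ from Frankl and F\"uredi's original, which avoids flag algebras entirely.
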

In fact Frankl and F\"uredi showed rather more: they determined the Tur\'an number for this problem and showed the unique extremal graph is a balanced blow-up of $H_6$. Even more, they proved that all $K_4^-$-free 3-graphs with no induced copy of $G_1$ are either (possibly unbalanced) blow-ups of $H_6$ or are of the form given by Construction~\ref{c1} in the previous subsection.

An attentive reader will observe that the density version of Frankl and F\"uredi's result which we stated above is very similar to Theorem~\ref{k4-f32}. Indeed, the two results share the same lower bound construction. Let us observe that forbidding a 3-graph from containing a copy of $K_4^-$ or $F_{3,2}$ is strictly weaker than forbidding a 3-graph from containing a copy of $K_4^-$ or an induced copy of $G_1$ (which is equivalent to requiring that all 4-sets span exactly 0 or 2 edges). Theorem~\ref{k4-f32} is thus a nominally stronger result than Theorem~\ref{ff}.

\begin{lemma}
Suppose $G$ is a 3-graph in which 4-sets span exactly 0 or 2 edges. Then $G$ is $(K_4^-,F_{3,2})$-free. The converse is false.
\end{lemma}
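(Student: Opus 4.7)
The plan is to split the forward implication into two separate subcases---one for each forbidden subgraph---and then produce a small counterexample to the converse. Since $K_4^-$ is by definition a 4-set spanning exactly $3$ edges, the $K_4^-$-freeness is immediate from the hypothesis.

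The real content lies in the $F_{3,2}$-freeness, where I would argue by contradiction. Suppose $G$ contains a copy of $F_{3,2}$ on vertices $\{1,2,3,4,5\}$ with edge set $\{123, 124, 125, 345\}$. The key observation is that the three 4-subsets containing the ``spine'' $\{1,2\}$, namely $\{1,2,3,4\}$, $\{1,2,3,5\}$ and $\{1,2,4,5\}$, each already contain two edges of $F_{3,2}$; the 0-or-2 hypothesis therefore forces them to contain no further edges, and so none of $134, 234, 135, 235, 145, 245$ can be an edge of $G$. Now I would pivot to the 4-subset $\{1,3,4,5\}$: it contains the edge $345$, and the hypothesis demands exactly one further edge, which must be one of $134, 135, 145$ --- but all three of these were just ruled out. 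This contradiction establishes $F_{3,2}$-freeness.

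For the converse, it suffices to exhibit a $(K_4^-, F_{3,2})$-free 3-graph with some 4-set spanning a number of edges other than $0$ or $2$. The simplest choice is the 3-graph $H$ on vertex set $\{1,2,3,4\}$ with the single edge $\{1,2,3\}$: it trivially contains no $K_4^-$ (too few edges) and no $F_{3,2}$ (too few vertices), yet its unique 4-set spans exactly one edge.

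The only step that requires any care at all is the short piece of bookkeeping with the six ``forbidden'' edges in the $F_{3,2}$ argument; once that table is set up, the contradiction falls out immediately, and no combinatorial difficulty should arise elsewhere.
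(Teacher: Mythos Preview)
Your proof is correct and follows essentially the same route as the paper's: both argue that the three 4-sets through the special pair of $F_{3,2}$ already span two edges each, forcing six non-edges, and then observe that a fourth 4-set (containing the lone ``neighbourhood'' edge) is left spanning exactly one edge, a contradiction. Your counterexample for the converse is also the same as the paper's.
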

\begin{proof} Let $G$ be a 3-graph in which 4-sets span exactly 0 or 2 edges. Then $G$ is trivially $K_4^-$-free. Suppose it contained $F_{3,2}$ as a subgraph. By relabelling vertices, we have that $G$ contains $5$ vertices $a,b,c,d,e$ such that $abc, ade, bde, cde$ are all edges of $G$. Now the 4-set $\{a,b,d,e\}$ already spans 2 edges, so it cannot span any more. Thus neither of $abd,abe$ lies in $E(G)$. Similarly, none of $acd,ace$ and $bcd,bce$ can lie in $E(G)$. Now consider the 4-set $abcd$. This spans exactly one edge, the other three having been forbidden; but this contradicts the fact that $G$ is a 3-graph in which 4-sets  span exactly 0 or 2 edges.

To see that the converse is false, consider a 3-graph on 4 vertices with 1 edge. This is obviously $(K_4^-, F_{3,2})$-free but violates the condition that $4$-sets span exactly 0 or 2 edges. The same is true of any of its blow-ups.
\end{proof}

Finally, let us stress just how different forbidding induced subgraphs is to forbidding subgraphs. We have shown that $\pi(K_4^-, F_{3,2})=5/18$. In marked contrast is the following:
\begin{theorem} \label{inducedk4-f32}
\[\pi(\Text{induced }K_4^-, \ F_{3,2})=3/8.\]
\end{theorem}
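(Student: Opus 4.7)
The statement has the shape of the other flag algebra results in this section, so the proof will split into a combinatorial lower bound construction plus a flag algebra upper bound computed by \flagmatic. First I would establish the lower bound by recycling the balanced blow-up $H$ of $K_4$ from Theorem~\ref{j4f32}. We already know $H$ is $F_{3,2}$-free and has edge density $3/8+o(1)$, so the only new thing to check is that $H$ contains no \emph{induced} copy of $K_4^-$, i.e.\ no $4$-set spanning exactly $3$ edges. A straightforward case analysis of how $4$ vertices distribute among the $4$ parts of $H$ shows that a $4$-set in $H$ spans either $0$ edges (distributions $4$, $3{+}1$, $2{+}2$), $2$ edges (distribution $2{+}1{+}1$), or $4$ edges (distribution $1{+}1{+}1{+}1$). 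Since $3$ never appears, $H$ is induced $K_4^-$-free, and the lower bound $3/8$ follows.

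For the upper bound, I would feed the forbidden family $\{F_{3,2}, \text{induced }K_4^-\}$ into \flagmatic\ at admissible graph order $m=6$ (or $m=7$ if $6$ proves insufficient). The only conceptual subtlety is that an ``induced'' restriction must be handled at the level of the family $\mathcal{H}$ of admissible subgraphs of order $m$: a graph $H$ of order $m$ is admissible iff it is $F_{3,2}$-free \emph{and} no $4$-subset of its vertices induces $K_4^-$ (so that, unlike the non-induced case, $4$-sets spanning $3$ edges are banned while $4$-sets spanning $0,1,2,4$ edges are allowed). Once $\mathcal{H}$ is correctly generated, the rest of the machinery explained in Section~2.2 applies verbatim: enumerate intersection types $\sigma$ of order congruent to $m$ mod $2$ and the corresponding $\sigma$-flag families $\mathcal{F}_\sigma^l$; compute the $d(H)$ and $d_{F,F'}(H)$; run CSDP to optimise the positive semi-definite matrices $Q_i$ minimising $\max_{H \in \mathcal{H}}(d(H)+a_H)$.

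The expected main obstacle is producing a rigorous rational bound of exactly $3/8$. Since we believe the balanced blow-up of $K_4$ is extremal and stable, the floating-point bound returned by CSDP should be tight to within numerical error, so the route is the exact-solution route described in Section~2.3: use Remark~\ref{zeroevec} with the blow-up of $K_4$ to predict, for each $(\sigma,l)$, the zero eigenvectors of the optimal $Q_\sigma$, factor $Q_\sigma = R\, Q'\, R^T$ with $R$ rational, then round the entries of $Q'$ to nearby rationals and verify via Gaussian elimination that the rounded $Q'$ is still positive definite. If the rounding fails, one increases the denominator bound $q$ or enlarges $m$ and retries. Once the rationally rounded matrices yield flag algebra coefficients whose maximum is exactly $3/8$, the bound $\pi(\text{induced }K_4^-, F_{3,2}) \leq 3/8$ follows from inequality~(\ref{fab}).

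Matching this with the lower bound completes the proof, and the computation is recorded in the certificate \verb|43if32.js| listed in the table of Section~2.4, which can be independently verified with the checker program \verb|inspect_certificate.py| as described there.
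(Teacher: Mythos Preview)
Your proposal is correct and follows essentially the same approach as the paper: the lower bound is the balanced blow-up of $K_4$ (your case analysis confirming that $4$-sets span $0$, $2$, or $4$ edges is exactly the verification the paper leaves implicit), and the upper bound is a \flagmatic\ flag algebra computation recorded in the certificate \texttt{43if32.js}. You have simply spelled out in more detail what the paper compresses into two sentences.
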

\begin{proof}
The upper bound is from a flag algebra calculation using \flagmatic\ (see Section 2.4 for how to obtain a certificate). The lower bound is from consideration of a balanced blow-up of $K_4$.
\end{proof}

Note Theorems~\ref{inducedk4-f32} and~\ref{j4f32} are implied by Theorem~\ref{j5f32} and the observation that a blow-up of $K_4$ is $F_{3,2}$-free, $J_4$-free and contains no induced $K_4^-$. Indeed, suppose an $F_{3,2}$-free $3$-graph $G$ contains a copy of $J_5$. This consists of a 5-set $S$ together with a vertex $x \notin S$ and all $\size{S^{(2)}}$ possible $3$-edges containing $x$ and two vertices from $S$. Since $G$ is $F_{3,2}$-free, it must also be $K_5$-free, and hence at least one $3$-edge $e=\{abc\}$ from $S^{(3)}$ is missing in $G$. The $4$-set $\{xabc\}$ then spans an induced copy of $K_4^-$ in $G$.

\subsection{Nonprincipal pairs}
By Theorem~\ref{k4-f32}, $\pi(K_4^-, F_{3,2})=5/18$. On the other hand, Frankl and F\"uredi gave a lower bound of $2/7$ for $\pi(K_4^-)$ by considering a balanced iterated blow-up of $H_6$~\cite{FranklFuredi},
while F\"uredi, Pikurkho and Simonovits~\cite{FurediPikhurkoSimonovits} showed
$\pi(F_{3,2})=4/9$. Gathering all this together we have:
\[\pi(K_4^-,F_{3,2})=\frac{5}{18}< \min\left(\pi(K_4^-),\pi(F_{3,2})\right).\]
Thus $(K_4^-, F_{3,2})$ is an example of a \emph{nonprincipal} pair of 3-graphs---that is to say, a pair $F,F'$ with $\pi(F,F') < \min(\pi(F), \pi(F'))$. Nonprincipality for 3-graphs was conjectured by Mubayi and R\"odl~\cite{MubayiRodl} and first exhibited by Balogh~\cite{Balogh}. Mubayi and Pikhurko~\cite{MubayiPikhurko} then built on Balogh's ideas to give the first example of a nonprincipal pair of 3-graphs, and Razborov~\cite{Razborov2} used his flag algebra method to show $(K_4^-,C_5)$ is also a nonprincipal pair. We can exhibit yet another nonprincipal pair of 3-graphs:

\begin{theorem}\label{k4 d3 bound}
\[\pi(K_4, J_4) < 0.479371 < 1/2 \leq \pi(J_4).\]
\end{theorem}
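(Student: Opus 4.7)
The theorem contains three assertions: an upper bound $\pi(K_4, J_4) < 0.479371$, a trivial numerical inequality $0.479371 < 1/2$, and a lower bound $\pi(J_4) \geq 1/2$. My plan is to obtain the upper bound via a flag algebra computation using \flagmatic\ (as in the previous theorems of the paper) and to obtain the lower bound by exhibiting an explicit $J_4$-free construction attaining asymptotic density $1/2$.

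For the upper bound, I would feed the pair $\{K_4, J_4\}$ into \flagmatic\ with admissible graphs of order $m=6$ or, if needed, $m=7$, let it generate the intersection types, flags, and flag pair densities, and pass the resulting SDP to CSDP. Since the bound is not expected to be tight (it is stated as a strict inequality), I would use \flagmatic's rational rounding procedure on a Cholesky decomposition of the solver's output, keeping rounding errors below $10^{-6}$, and then verify that the resulting certified rational bound is indeed strictly below $0.479371$.

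For the lower bound, I need a $J_4$-free 3-graph of asymptotic edge density $1/2$. Iterated blow-ups are the natural tool, since if $H$ is $J_4$-free then its iterated blow-up is also $J_4$-free: the link of a vertex $v$ in the iterated blow-up decomposes as a disjoint union of a blow-up of the link of $v$'s top-level part in $H$ (supported on vertices outside $v$'s top-level part) and the link of $v$ in the sub-blow-up (supported inside that part), with no edges between the two sides, so any $K_4$ in the link of the iterated blow-up would force a $K_4$ either in a blown-up link of $H$ or, inductively, in a link of the sub-construction. The iterated blow-up of a template $H$ on $v$ vertices with $e$ edges has asymptotic density $6e/\bigl(v(v^2-1)\bigr)$; setting this equal to $1/2$ with $v=7$ forces $e=28$. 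A natural 7-vertex, 28-edge candidate is the complement of the Fano plane in $\binom{[7]}{3}$. In this $H$ the link of any vertex $u$ is $K_6$ minus the three pairs arising from the three Fano triples through $u$; since the Fano plane is a Steiner triple system, these pairs form a perfect matching on $V(H)\setminus\{u\}$, so the link is $K_6$ minus a perfect matching, namely the Tur\'an graph $K_{2,2,2}$. This graph is $K_4$-free, so $H$ is $J_4$-free, and its iterated blow-up provides the required lower bound.

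The main obstacle I anticipate is on the upper bound side: we need \flagmatic\ to return a certified rational bound strictly below $1/2$. As the true value of $\pi(K_4, J_4)$ is presumably rather close to $1/2$, it may be necessary to push to $m=7$, at or near the computational boundary of \flagmatic, or to apply additional refinements such as a finer choice of intersection types. The lower bound construction, by contrast, is clean, explicit, and requires only a routine verification of $J_4$-freeness at the level of the Fano plane.
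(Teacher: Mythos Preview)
Your proposal is correct and follows essentially the same approach as the paper: the upper bound is obtained by a \flagmatic\ flag algebra calculation, and the lower bound $\pi(J_4)\ge 1/2$ comes from the balanced iterated blow-up of the complement of the Fano plane. The paper attributes this construction to Bollob\'as, Leader and Malvenuto~\cite{BollobasLeaderMalvenuto}; you have independently reconstructed it and supplied the verification that its links are $K_{2,2,2}$ (hence $K_4^{(2)}$-free), which the paper omits here since it cites the reference.
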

\begin{proof}
The upper bound on $\pi(K_4, J_4)$ is from a flag algebra calculation using \flagmatic\ (see Section 2.4 for how to obtain a certificate). The lower bound for $\pi(J_4)$, due to Bollob\'as, Leader and Malvenuto~\cite{BollobasLeaderMalvenuto}, is a balanced iterated blow-up of the complement of the Fano plane.
\end{proof}

Given that $\pi(K_4)\geq 5/9$, it follows that $(K_4, J_4)$ is a fourth nonprincipal pair of 3-graphs. (It is in fact very similar to the example given by Mubayi and Pikhurko~\cite{MubayiPikhurko}, who showed that $(K_4, J_5)$ is a nonprincipal pair.) Note that we can show $\pi(K_4, J_4) \ge 2/5$ by considering an iterated blow-up of 
\[([6], \{123,124,125,134,135,146,156,236,245,246,256,345,346,356\}),\]
but $2/5$ is quite far from the upper bound.

\begin{question} \label{k4j4question}
What is $\pi(K_4, J_4)$?
\end{question}

Finally, let us remark that the extremal theory of 3-graphs with independent neighbourhoods also exhibits nonprincipality: by Theorems~\ref{k4-c5f32} and~\ref{k4-f32},
\[\pi(K_4^-,C_5, F_{3,2})=12/49< \pi(K_4^-,F_{3,2})=5/18<\pi(C_5, F_{3,2})=4/9,\]
where in the last line we have used the fact that $\pi(C_5,F_{3,2})=\pi(F_{3,2})$ (which holds since the extremal configuration for $F_{3,2}$ is $C_5$-free.) Thus even in the case of 3-graphs with independent neighbourhoods we cannot hope for some analogue of the Erd\H os-Stone theorem from extremal graph theory.

Nonprincipality is in general hard to prove by hand; it can however be a useful tool to know when attacking Tur\'an density problems: a common strategy when studying $\pi(\mathcal{F})$ for some family $\mathcal{F}$ is to try showing that $\pi(\mathcal{F},G)$ is less than the conjectured valued of  $\pi(\mathcal{F})$ for some nice, dense 3-graph $G$, and then use the presence of a (large) number of copies of $G$ in a putative $\mathcal{F}$-extremal example to bound the edge-density. (See for example~\cite{deCaenFuredi} for a nice example of this technique.) So provided that $\pi(\mathcal{F},G)< \pi(\mathcal{F}) \leq \pi(G)$ is actually true, that we have a (conjectured) extremal $\mathcal{F}$-free construction, and that $G$ and the graphs in $\mathcal{F}$ are not too large, \flagmatic\ can be expected to show nonprincipality holds. 

%Thus, in addition to proving exact Tur\'an density results, this is another way in which we hope \flagmatic\ may prove to be a useful aid for researchers
%In this and other instances where tight bounds are not required, it may thus prove to be a useful aid to researchers.

\section{Concluding remarks}
\subsection{The complexity barrier}
We have already remarked in Section 2.2 that a flag algebra calculus-based approach cannot at present hope to give exact Tur\'an density results for 3-graphs on $7$ or more vertices. In this subsection, we shall consider some problems for small 3-graphs that we believe are still intractable, at least using the flag algebra method.

In contrast to the situation for graphs, we do not expect stability in general in extremal 3-graph theory. Indeed, we saw in Section 3.2 that if the conjectures of Tur\'an and S\'os are true then the Tur\'an problems for $K_4$ and $K_5$ are unstable. In fact generally the $K_t$ problem is conjectured to be unstable, non-isomorphic families of constructions having been given by Keevash and Mubayi~\cite{Keevash}. We mentioned another example of conjectured instability in Section 3.1 when we considered the Erd\H os-S\'os conjecture on odd cycles in link graphs and added many new constructions to the two given by Frankl and F\"uredi~\cite{FranklFuredi}.

Whatever the method used, unstable problems tend of course to be more difficult to handle than stable ones, and the flag algebra calculus is no exception to this trend. The bounds yielded by \flagmatic\ on the three problems mentioned above are 
\begin{align*}
5/9&\leq\pi(K_4) < 0.561666,\\ 
3/4 &\leq \pi(K_5)  <  0.769533 \textrm{ and}\\
1/4&\leq \pi(\Text{odd cycles in link graph}) < 0.258295
\end{align*} 
respectively, and we do not believe that these can be made tight even by an increase in computational firepower. A heuristic justification for our pessimism is as follows: the flag algebra calculus obtains bounds by considering how flags can intersect with each other; this information is then used to give inequalities which must be satisfied by the admissible subgraph densities. In an unstable problem however, several very different global intersection structures are possible, and what is a correct, sharp subgraph density inequality in one structure may well be false in another. Indeed some admissible subgraphs may be present in one extremal configuration with strictly positive density, but absent in another. As remarked in Section 2.2, a hypothetical tight flag algebra bound would have to be tight on all such subgraphs simultaneously; this seems a rather unlikely situation to hope for. In this sense, unstable problems appear to be beyond the scope of the flag algebra calculus method at present.

Another hurdle we have to face is that of stable problems with `complex' extremal configurations. Let us define more precisely what we mean by this. Recall the definition of \emph{blow-up} and \emph{iterated blow-up} introduced in Section 2.1. Currently all known stable extremal configurations for 3-graphs consist of blow-ups of some (possibly degenerate) 3-graphs. Frankl and F\"uredi gave however an iterated blow-up construction for the $K_4^-$ problem which is conjectured to be best possible. Since Frankl and F\"uredi's paper, Mubayi and R\"odl~\cite{MubayiRodl} (for the $C_5$ problem) and Bollob\'as, Leader and Malvenuto~\cite{BollobasLeaderMalvenuto} (for the $J_4$ problem) have both given us instances of the Tur\'an density problem where an iterated blow-up construction is conjectured to be best possible. To these let us add a fourth:

\begin{conjecture} \label{k4-c5conj}
\[\pi(K_4^-,C_5)=1/4.\]
\end{conjecture}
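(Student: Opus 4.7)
The conjecture has a clear lower bound: the balanced iterated blow-up of a single 3-edge (Construction~\ref{c3}) achieves asymptotic density $1/4$. To verify it is $K_4^-$-free, observe that any $4$ vertices distribute over the three outermost parts as $(2,1,1)$, $(2,2,0)$, $(3,1,0)$, or $(4,0,0)$; in the first case exactly $2$ cross edges exist and in the remaining non-$(4,0,0)$ cases at least one of the four vertices has degree zero in the induced subgraph, while $K_4^-$ has minimum degree $2$. For $C_5$-freeness, note that $C_5$ is $3$-regular, and a case check of the distributions $(3,1,1), (2,2,1), (3,2,0), (4,1,0)$ shows that the number of edges available at the outermost level is at most $4$; inner edges cannot make up the deficit, since $3$ vertices in a common part can support at most one $3$-edge. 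By recursion in the $(4,0,0)$ and $(5,0,0)$ cases, no $K_4^-$ or $C_5$ appears at any level. The asymptotic density of the construction satisfies the fixed-point equation $d = 2/9 + (1/9)d$, yielding $d = 1/4$.

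For the upper bound, a direct application of the flag algebra calculus cannot succeed: as emphasized earlier in this section, iterated blow-up extremal configurations lie beyond the reach of a single semi-definite programming round, and Proposition~\ref{k4-c5} indeed gives only a strictly weaker bound. The natural strategy is stability plus induction. The first step is to prove, via \flagmatic, a stability statement: for every $\varepsilon > 0$ there exist $\delta > 0$ and $n_0$ such that any $(K_4^-, C_5)$-free $3$-graph $G$ on $n \geq n_0$ vertices with $d(G) \geq 1/4 - \delta$ admits a tripartition $V(G) = A \sqcup B \sqcup C$ with at most $\varepsilon n^3$ edges not of type $ABC$. This structural claim does not require the tight density bound; it suffices to identify the admissible subgraphs appearing with positive density in the iterated extremal construction and invoke Lemma~\ref{tightcoeff} to force the other admissible subgraphs to vanish in the limit. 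Given stability, set $f(n) = \ex(n, K_4^-, C_5)$ and apply induction: the number of $ABC$-type edges is at most $|A||B||C| \leq (n/3)^3$ after a symmetrisation that balances the parts, while cross-type $AAB$-edges can be bounded by $o(n^3)$ using the forbidden configurations (e.g.\ an $AAB$ edge $\{a,a',b\}$ together with two $ABC$-type edges $\{a,b,c\}, \{a',b,c\}$ already produces a $K_4^-$ on $\{a,a',b,c\}$). Recursion inside each part then yields $f(n) \leq (n/3)^3 + 3 f(n/3) + o(n^3)$, whose solution is $(1/4 + o(1))\binom{n}{3}$.

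The main obstacle is the stability step. For iterated blow-up extremal configurations, showing that \emph{all} near-extremal graphs lie close to a blow-up of some base structure is considerably more delicate than in the non-iterated case: a near-extremal $3$-graph might plausibly look like an iterated blow-up whose outermost partition is only revealed after further refinement, and one must argue that the coarsest tripartition can always be detected directly from the graph. This will likely require a detailed analysis of link graphs (which in an extremal configuration should themselves be close to iterated bipartite structures) together with a cleaning argument to bound cross-type edges. Given the self-similar nature of the conjectured extremal configuration, a hybrid argument combining \flagmatic\ with ad hoc combinatorial analysis appears necessary, and the technical details of the stability reduction are likely to constitute the bulk of the work.
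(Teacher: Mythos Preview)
The statement is presented in the paper as an open conjecture; the paper does not prove it, giving only the lower-bound constructions and the non-tight flag algebra upper bound of Proposition~\ref{k4-c5}. Your lower-bound verification is correct and matches the paper's. The difficulty lies entirely in the upper-bound strategy you sketch, and there your plan has a genuine flaw beyond the incompleteness you already acknowledge.

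Your proposed stability statement---that every near-extremal $(K_4^-,C_5)$-free $3$-graph admits a tripartition $A\sqcup B\sqcup C$ with almost all edges of type $ABC$---is false if the conjecture holds. The paper itself points out that the balanced iterated blow-up of $H_7$ is a \emph{second} construction attaining asymptotic density $1/4$, and $H_7$ is not $3$-partite: any tripartition of $[7]$ into parts of sizes $a,b,c$ with $a+b+c=7$ supports at most $abc\le 12$ rainbow triples, whereas $H_7$ has $14$ edges. Consequently a positive proportion of the edges in any (iterated) blow-up of $H_7$ fall outside every $ABC$ structure, so your stability hypothesis fails for this near-extremal family, and the induction you build on top of it cannot start. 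Any stability-plus-induction attack would at minimum have to accommodate both the $3$-edge and $H_7$ iterated blow-ups simultaneously (and quite possibly further constructions), which changes the shape of the argument substantially.

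There is also a second, independent gap. You propose to obtain stability by invoking Lemma~\ref{tightcoeff} to force non-extremal admissible subgraphs to vanish. But the hypothesis of that lemma is precisely that the flag algebra bound is \emph{tight}, and here it is not: the bound is $0.251073$, and the paper argues in this very section that no semi-definite round will close the gap, because iterated-blow-up extremal configurations have superpolynomially many positive-density subgraphs. So the mechanism you cite for deducing stability is unavailable in exactly the situation where you want to apply it.
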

The lower bound in Conjecture~\ref{k4-c5conj} is attained for example by a balanced iterated blow-up of the 3-edge $([3], \{123\})$, or by a balanced iterated blow-up of $H_7$. To give motivation for our conjecture, let us note that we can get the following bounds on $\pi(K_4^-,C_5)$:
\begin{proposition} \label{k4-c5}
\[1/4 \leq \pi(K_4^-,C_5) < 0.251073 \]
\end{proposition}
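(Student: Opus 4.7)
The plan is to handle the two bounds separately, following the template of Theorems~\ref{k4-c5f32}--\ref{j5f32}. For the upper bound $\pi(K_4^-,C_5)<0.251073$, I would run the Flagmatic pipeline of Section~2.3 on the forbidden family $\{K_4^-,C_5\}$ at admissible-graph order $m=7$. Since Conjecture~\ref{k4-c5conj} predicts $\pi(K_4^-,C_5)=1/4$---strictly below the target $0.251073$---the floating-point bound produced by the SDP solver is not expected to be tight. Consequently the simpler Cholesky-and-round path of Section~2.3 should suffice, with denominators chosen large enough to keep the rational bound strictly under $0.251073$. The resulting certificate \verb|k4-c5.js| may be verified as described in Section~2.4.

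For the lower bound, I would use Construction~\ref{c3}, the balanced iterated blow-up of the single 3-edge $([3],\{123\})$. Writing $d$ for its asymptotic edge density and conditioning on how a uniformly random triple of vertices splits across the three top-level parts $A_1,A_2,A_3$, the only edge-contributing configurations are triples with one vertex in each part (probability $2/9$, edge probability $1$) and triples entirely inside one part (probability $1/9$, edge probability $d$ by self-similarity). This yields $d=2/9+d/9$, so $d=1/4$.

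It then remains to verify the construction is $(K_4^-,C_5)$-free, which I would argue by induction on the blow-up depth. Given any $4$ (resp. $5$) chosen vertices not all lying in the same top-level part, I would case-split on their distribution across $A_1,A_2,A_3$. The two key structural facts are that at the top level the only edges are of type $A_1A_2A_3$, and that vertices in distinct top-level parts share no common edge at any deeper level. For $K_4^-$, no split of the four vertices yields more than $2$ edges (the maximum occurs in the (2-1-1) split), which is strictly less than $K_4^-$'s three edges. For $C_5$, the only splits of the five vertices that do not already isolate some vertex from the rest are (3-1-1) and (2-2-1); each yields at most $4$ edges and forces some chosen vertex to have degree at most $2$, incompatible with the $3$-regular structure of $C_5$ on five vertices. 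The case where all chosen vertices lie in one level-1 part is handled by the inductive hypothesis.

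The main obstacle throughout is computational rather than conceptual: the SDP at $m=7$ with two forbidden $3$-graphs is near the frontier of what Flagmatic can handle, and one must choose rounding denominators carefully so that the certified rational bound does not exceed $0.251073$. The combinatorial verification of the construction, while a careful case analysis, is routine.
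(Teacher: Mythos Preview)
Your proposal is correct and follows essentially the same approach as the paper: upper bound via a \flagmatic\ certificate, lower bound via the iterated blow-up of a single 3-edge. The paper's verification of the construction is slightly slicker---it observes that the link graphs are bipartite (hence $K_4^-$-free) and that any 5-set spans at most $4$ edges (hence cannot contain $C_5$, which has $5$)---whereas you do a direct case analysis on vertex distributions and degrees; both routes are valid and elementary.
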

\begin{proof}
The upper bound is from a flag algebra calculation using \flagmatic\ (see Section 2.4 for how to obtain a certificate). The lower bound is from an iterated blow-up of the 3-edge---this has bipartite links, hence is $K_4^-$-free. Moreover 5-sets of vertices are easily seen to span $0,1,2,3$ or $4$ edges, which is not sufficient for a copy of $C_5$ to appear as a subgraph. 
\end{proof}

Deferring our discussion of the limits of the flag algebra calculus for the moment, let us state why one should reasonably expect iterated blowup constructions to be the best possible for the $K_4^-=J_3$ and the $J_4$ problem, or indeed for the $J_t$ problem in general. (Why it should crop up in problems involving $C_5$ seems a little more mysterious.)

Suppose we have a non-degenerate 3-graph $H$ on $l$ vertices which is $J_t$-free. Then any iterated blow-up of $H$ will be $J_t$-free. Indeed, let $G$ be an iterated blow-up of $H$. Let $x \in V(G)$ and let us show its link graph is $K_{t}^{(2)}$-free. Consider a $t$-set of vertices $\{a_1, a_2, \dots, a_t\}$ in $G_v$. If all of the $a_i$ lie in the same level 1 part of $G$ as $v$, we can drop down to a lower level of the iterated construction, so we may assume without loss of generality $v \in A_0$ and $a_1 \in A_1$, where $A_0, A_1$ are two distinct level 1 parts. As $H$ was non-degenerate, there are no edges of type $A_0A_0A_0$, $A_1A_1A_1$, $A_0A_0A_1$ or $A_0A_1A_1$ in $G$. Thus for the purpose of finding a copy of $K_t^{(2)}$ in $G_v$ we may assume that $v, a_1, a_2, \dots, a_t$ all lie in different level 1 parts $A_0, A_1, A_2, \dots, A_{t}$ of $G$. But then the subgraph of $G$ induced by $v, a_1, a_2, \dots, a_t$ is isomorphic to a subgraph of $H$, which by hypothesis has $K_t^{(2)}$-free link graphs. Thus $G$ has $K_t^{(2)}$-free link graphs and is $J_t$-free as claimed. It follows from this that for the $J_t$ problem blow-up constructions cannot be best possible. (Note that blowing up a 3-graph containing a degenerate edge trivially gives a copy of $J_t$, so that our argument above does indeed cover all possible cases.)

Iterated blow-up constructions are therefore far from pathological, and one should expect them to crop up frequently in extremal 3-graph theory. Their structure is however much harder to grasp than that of their blow-up relatives. For example, the blow-up of a 3-graph $H$ (with no degenerate edge of the form $vvv$) will always be $\size{V(H)}$-partite. In contrast, for any $N \in \mathbb{N}$ sufficiently large (nontrivial) iterated blow-ups will fail to be $N$-partite: the level 1 edges force at least two parts, then looking into one of the parts, the level 2 edges force at least one more part, then looking into one of the subparts, the level 3 edges force at least one more part, and so on. This is one reason we would not expect the structure of iterated blow-up configurations to be properly captured by the flag algebra calculus.

\begin{proposition}\label{itblowupbounds}
\begin{align*}
2/7 \leq \pi(K_4-) &\leq 0.286889,\\
2\sqrt{3} -3 \leq \pi(C_5) &\leq 0.468287, \ \text{and} \\
1/2  \leq \pi(J_4) &\leq 0.504081.
\end{align*}
\end{proposition}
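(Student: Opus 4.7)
The plan is to establish each of the three inequalities separately, pairing a Flagmatic-assisted flag algebra calculation for the upper bound with a concrete blow-up construction for the lower bound. The upper bounds follow the same template as the earlier theorems in this section: feed the forbidden family into \flagmatic, choose an admissible graph order $m$ (most likely $m=7$, since $m=6$ is unlikely to suffice given how close the bounds are to the lower bounds), let the SDP solver produce floating-point matrices $Q_i$, and then round the $Q'$ and $R$ matrices to rationals so that the certified bound is valid. The three certificates \verb|k4-.js|, \verb|c5.js| and \verb|blm.js| referenced in the earlier table encode these three calculations, and checking them with \verb|inspect_certificate.py| reduces the verification to the four-stage process described in Section 2.4.

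For the lower bounds I would appeal to the three known iterated blow-up constructions. For $\pi(K_4^-)\geq 2/7$, take the balanced iterated blow-up of $H_6$ from Frankl and F\"uredi: since every link graph in $H_6$ is a $5$-cycle, it is triangle-free, and this property is preserved under iterated blow-ups (a $K_3^{(2)}$ in a link would have to live within a single level-$k$ copy of $H_6$ for some $k$), so the resulting $3$-graph is $K_4^-$-free; a standard recursion for the edge density of a balanced iterated blow-up of a $3$-graph on $l$ vertices with edge density $d$ gives asymptotic density $d\cdot l^3/(l^3-l) = 10\cdot 6^3/(10\cdot 6 \cdot 210)$, which works out to $2/7$. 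For $\pi(C_5)\geq 2\sqrt 3-3$ one uses the Mubayi--R\"odl iterated construction; here the density requires an optimisation over the relative sizes of the parts at each level, and solving the resulting fixed-point equation yields $2\sqrt 3-3$. For $\pi(J_4)\geq 1/2$, the Bollob\'as--Leader--Malvenuto construction is the balanced iterated blow-up of the complement of the Fano plane, which has $\binom{7}{3}-7 = 28$ edges on $7$ vertices, giving density $28/35 \cdot 7^3/(7^3-7) = 4/5 \cdot 343/336 = 1/2$ after working out the limit; one then checks that this $3$-graph is $J_4$-free (equivalently that every link graph is $K_4^{(2)}$-free) by the argument already used in Section~4.1 for iterated blow-ups of $J_t$-free non-degenerate templates.

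The three upper-bound computations are entirely mechanical once the certificates exist, so the ``proof by hand'' collapses to the routine verification steps enumerated in Section~2.4; the one place where care is genuinely needed is the rounding step, because the bounds are very close to the conjectured truth and the floating-point matrices are almost certainly not positive definite after naive rounding. The main obstacle I expect is therefore the same one flagged in Section~2.3: since none of these problems is believed to have a tight flag algebra bound at the available value of $m$ (we are not at equality with the conjectured lower bound), the standard trick of using zero eigenvectors from the extremal configuration is not available, so one must rely on the cruder rational rounding of the Cholesky factors and accept a small loss in the bound. Choosing the denominator $q$ large enough to keep the certified bound below the advertised thresholds $0.286889$, $0.468287$, $0.504081$ may require some experimentation, but is guaranteed to succeed provided the underlying floating-point bounds are strictly below these thresholds.
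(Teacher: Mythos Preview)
Your approach is essentially identical to the paper's: upper bounds via the three \flagmatic\ certificates, lower bounds via the Frankl--F\"uredi iterated blow-up of $H_6$, the Mubayi--R\"odl iterated bipartite construction, and the Bollob\'as--Leader--Malvenuto iterated blow-up of the Fano complement. One small caution: your density computations are garbled---the correct recursion for a balanced iterated blow-up of a non-degenerate $3$-graph with $e$ edges on $l$ vertices gives asymptotic density $6e/(l^3-l)$, so for $H_6$ you get $60/210=2/7$ and for the Fano complement $168/336=1/2$; your displayed intermediate expressions (e.g.\ $4/5\cdot 343/336$) do not actually evaluate to the answers you claim, so tidy that up before submission.
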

\begin{proof}
The upper bounds are from three flag algebra calculations using \flagmatic\ (see Section 2.4 for how to obtain a certificate). Lower bounds from (respectively) a balanced iterated blow-up of $H_6$~\cite{FranklFuredi}, a blow-up of $([2], \{112\})$ with $\size{A_1} \approx \sqrt{3} \size{A_2}$ and the construction iterated inside $A_2$~\cite{MubayiRodl}, and a balanced iterated blow-up of the complement of the Fano plane~\cite{BollobasLeaderMalvenuto}.
\end{proof}

We do not believe that the above three upper bounds can be made tight by a purely flag algebra calculus approach, and similarly we do not expect Conjecture \ref{k4-c5conj} to be resolved this way either.

Let us give here some heuristic justification for our pessimism regarding these bounds, beyond the mere fact that they fail to be tight. Given a nontrivial graph $H$ on $t$ vertices and an integer $k$, the number of subgraphs of order $k$ with strictly positive density in large blow-ups of $H$ will grow polynomially in $k$. Indeed let $H^+$ be a blow-up of $H$ such that for each vertex $x$ of $H$, a strictly positive proportion of the vertices of $H^+$ lie in the part $A_x$ associated with $x$. Given $H^+$'s $t$-partite structure, a $k$-subgraph of $H^+$ is entirely determined by the number of vertices it meets in each of $H^+$'s $t$ parts. Thus, up to a constant order correction factor, we expect the number of $k$-subgraphs of $H^+$ to be roughly $ \binom{k+t-1}{t-1}$.

By contrast, the number of subgraphs of order $k$ found in an iterated blow-up of $H$ will be superpolynomial. Indeed, suppose for simplicity's sake that $H^{\oplus}_n$ is a large balanced iterated blow-up of $H$ of order $n$, and let $f_n(k)$ be the number of $k$-subgraphs of $H^{\oplus}_n$. It is straightforward that for any fixed $k$, $f_n(k)$ converges to some number $f(k)$. 
%Call the edges corresponding to the i\textsuperscript{th} iteration of the construction the i\textsuperscript{th} level edges. Pick some integer $K$; the top level edges of a $K$-subgraph of $H^{\oplus}_n$ are determined by how it intersects the $s$-partition obtained after the first iteration of the construction; similarly the second level edges are determined by how the $K$-subgraph intersects the $s^2$-partition obtained after the second iteration of the construction, and so on. Thus we expect that the order of $f(K)$  is (up to a constant correction factor) $\sum_{x_1+x_2+ \ldots x_s=K$%
Now pick some integer $K$. The value of $f(K)$ is then, up to a constant order correction factor,
\begin{equation}
\sum_{x_1+x_2+ \cdots +x_t=K} \prod_i f(x_i), \label{hplus}
\end{equation}
where the sum is taken over all partitions of $K$ into $s$ nonnegative integers $x_1, x_2, \dots, x_s$. Since $H^{\oplus}_n$ contains a large blow-up of $H$ as a subgraph, we know $f(k)$ has to grow at least at polynomial rate in $k$.  The estimate (\ref{hplus}) then implies $f(k)$ grows in fact faster than any polynomial.

This superpolynomial growth rate is an objective measure of the fact that iterated blow-ups are significantly more `complex' as 3-graph configurations than blow-up constructions. Computationally speaking, it is very bad news for an approach based on the flag algebra calculus. As we remarked in Section 2.1, if we obtain the correct upper bound on a Tur\'an density problem, the flag algebra bound must be tight on all subgraphs that appear with strictly positive density in an extremal construction, whereas some slack is expected for the rest of the admissible subgraphs. In this sense iterated blow-up constructions require us to prove far more delicate inequalities than mere blow-up constructions---the far richer subgraph structure of iterated blow-ups leaving us with much less room to spare in our optimisation, making our task significantly harder. We therefore expect that most attempts to attack problems admitting iterated blowups as extremal constructions with \flagmatic\ will run into the limits set by the SDP solver and fail to get tight bounds.

\subsection{Further open problems}
The most obvious challenge our discussion above leaves open is the following. Say that a Tur\'an problem is \emph{simple} if the number of subgraphs of order $k$ which can occur with density bounded below by some $\varepsilon>0$ in an extremal configuration grows polynomially in $k$, and that a Tur\'an problem is \emph{complex} otherwise.

\begin{question}
Can we obtain an exact Tur\'an density result for a complex problem?
\end{question}

In Section 3.2 we proved a number of results in the extremal theory of 3-graphs with independent neighbourhoods. As the extremal construction for $F_{3,2}$ is $K_4$-free, it is easy to see that $\pi(K_t, F_{3,2})=4/9$ for all $t \geq 4$. Having considered both the $J_t$ (complete graphs in links) and the odd cycle in links problem, the most natural question to ask next is perhaps: what happens if instead of forbidding all odd cycles we only forbid odd cycles of a given length in the link graphs? For example:
\begin{question}~\label{longoddcycle}
Is $\pi(F_{3,2}, \Text{ odd cycle of length at least 5 in link})=1/4$?
\end{question}
and
\begin{question}~\label{shortoddcycle}
Is $\pi(F_{3,2}, \Text{ odd cycle of length at most 5 in link})=1/4$?
\end{question}
Note that if a vertex in a 3-graph $G$ has a triangle in its link graph, then for any odd length $l\geq 3$, sufficiently large blow-ups of $G$ will have link graphs containing odd cycles of length $l$; were it not for the nature of Construction~\ref{c1}, this would suggest the answer to Question~\ref{longoddcycle}  is `Yes'. Also, Theorem~\ref{k4-f32} tells us the answer to Question~\ref{shortoddcycle} is `No' if we replace $7$ by $5$ (since $\pi(K_4^-, F_{3,2})=5/18$), making the question more open-ended than the upper bounds we are able to obtain on the problem using \flagmatic\ suggest.

\subsection{Summary of results and constructions}
We set in the table below the constructions and \flagmatic\ bounds for the Tur\'an density problems discussed in the paper.

\begin{center}
{\small
\begin{tabular}{  p{3.5cm}  p{1.9cm}  p{2cm} p{5.5cm}}
\hline
Forbidden graphs & Lower bound for $\pi$ &  Upper bound for $\pi$ 	& (Conjectured) Extremal configuration(s) \\
\hline
$K_4^-, \ C_5, \ F_{3,2}$ & $12/49$& $12/49$  &  Blow-up of $H_7$. \\   
$K_4^-, \ C_5$ & $1/4$ & $0.251073$  &  Iterated blowup of a 3-edge. \\ 
$F_{3,2}$, odd cycle in links & $1/4$ & $0.255886$ & Geometric~\cite{FranklFuredi}; see Construction~1 in Section 3.2. \\
odd cycle in links & $1/4$ & $0.258295$ & Many; see Section 3.2. \\  
$K_4^-, \ F_{3,2}$ & $5/18$ & $5/18$ &  Blowup of $H_6$~\cite{FranklFuredi}. \\ 
$K_4^-$ & $2/7$ & $0.286889$ &  Iterated blowup of $H_6$~\cite{FranklFuredi}. \\ 
$J_4, \ F_{3,2}$ & $3/8$ & $3/8$ & Blow-up of $K_4$. \\
$J_5, \ F_{3,2}$ & $3/8$ & $3/8$ & Blow-up of $K_4$. \\ 
$F_{3,2}$, \ induced $K_4^-$ & $3/8$ & $3/8$ & Blow-up of $K_4$. \\ 
$F_{3,2}$ & $4/9$ & $4/9$~\cite{FurediPikhurkoSimonovits} & Bipartition of the vertex set into two parts $A$ and $B$ with $\size{A} \approx 2 \size{B}$, all edges of type $AAB$~\cite{FurediPikhurkoSimonovits}. \\ 
$J_4, \ K_4$ & $2/5$ & $0.479371$ & \\ 
$J_4$ & $1/2$ & $0.504081$ & Iterated blowup of the complement of the Fano plane~\cite{BollobasLeaderMalvenuto}. \\ 
$C_5$ & $2\sqrt{3}-3$ & $0.468287$ &  Bipartition of the vertex set into two parts $A$ and $B$ with $\size{A} \approx \sqrt{3} \size{B}$, all edges of type $AAB$, then iterate inside $B$~\cite{MubayiRodl}. \\
$K_4$, \ induced $G_1$ & $5/9$ & $5/9$~\cite{Razborov2} & Tur\'an's construction. \\ 
$K_4$ & $5/9$ &$0.561666$~\cite{Razborov2} &  Many; see~\cite{Brown, FonderFlaass, Frohmader, Kostochka}. \\
$K_5$, \ 5-set spanning 8 edges & $3/4$ &$3/4$ & Complete bipartite graph. \\
$K_5$ & $3/4$ & $0.769533$~\cite{BaberThesis} & Many; see~\cite{Sidorenko}. \\
\hline
\end{tabular}
}
\end{center}

\section*{Acknowledgements}

The authors would like to thank Peter Keevash and Mark Walters for their helpful comments and discussions.

\end{document}